\newcommand{\nN}{\mathbb{N}}                     % natural number
\newcommand{\nZ}{\mathbb{Z}}                     % integer number
\newcommand{\nR}{\mathbb{R}}                     % real number
\newcommand{\nC}{\mathbb{C}}                     % complex number
\newcommand{\nP}{\mathbb{P}}                     % projective space
\newcommand{\sE}{\mathscr{E}}
\newcommand{\sF}{\mathscr{F}}
\newcommand{\sO}{\mathscr{O}}                    % structure sheaf
\newcommand{\sH}{\mathscr{H}}
\newcommand{\sI}{\mathscr{I}}                    % ideal sheaf
\newcommand{\sK}{\mathscr{K}}
\DeclareMathOperator{\bl}{Bl}                    % Blowup
\DeclareMathOperator{\bireg}{bireg}              % bireg
\DeclareMathOperator{\sHom}{\mathscr{H}om}       % sheaf Hom
\DeclareMathOperator{\Hom}{Hom}                  % Hom
\DeclareMathOperator{\Supp}{Supp}                % Supp
\DeclareMathOperator{\reg}{reg}                  % reg
\DeclareMathOperator{\supp}{Supp}                % Supp
\DeclareMathOperator{\Proj}{Proj}                % Proj
\newtheorem{theorem}{Theorem}[section]
\newtheorem{proposition}[theorem]{Proposition}
\newtheorem{lemma}[theorem]{Lemma}
\newtheorem{corollary}[theorem]{Corollary}
\theoremstyle{definition}
\newtheorem{definition}[theorem]{Definition}
\newtheorem{remark}[theorem]{Remark}
\newtheorem{example}[theorem]{Example}
\numberwithin{equation}{section}
\begin{document}
\title[Some Results on Asymptotic Regularity of Ideal Sheaves]{Some Results on Asymptotic Regularity of Ideal Sheaves}

\author{Wenbo Niu}
%    Current address
\address{Department of Mathematics, Statistics, and Computer
Science, University of Illinois at Chicago, 851 South Morgan Street,
Chicago, IL 60607-7045, USA}

\email{wniu2@uic.edu}
%    \thanks will become a 1st page footnote.

\subjclass[2010]{Primary  14Q20,  13A30}

\keywords{Regularity, powers of ideals}

\date{}
\maketitle
\begin{abstract} Let $\sI$ be an ideal sheaf on $\nP^n$ defining a subscheme $X$. Associated to $\sI$ there are two elementary invariants: the invariant $s$ which measures the positivity of $\sI$, and the minimal number $d$ such that $\sI(d)$ is generated by its global sections. It is now clear that the asymptotic behavior of $\reg \sI^t$ is governed by $s$ but usually not linear. In this paper, we first describe the linear behavior of the asymptotic regularity by showing that if $s=d$, i.e., $s$ reaches its maximal value, then for $t$ large enough $\reg \sI^t=dt+e$ for some positive constant $e$. We then turn to concrete geometric settings to study the asymptotic regularity of $\sI$ in the case that $X$ is a nonsingular variety embedded by a very ample adjoint line bundle. Our approach also gives regularity bounds for $\sI^t$ once we know $\reg \sI$ and assume that $X$ is a local complete intersection.
\end{abstract}

\tableofcontents
\section{Introduction}

\noindent The motivation of this paper is to understand several interesting phenomena arose in recent research on the asymptotic Castelnuovo-Mumford regularity of an ideal sheaf. Throughout the paper, we work over the field of complex number $\nC$ and a variety is always reduced and irreducible.

Let us first mention an interesting result on the asymptotic regularity of a homogeneous ideal, for which the picture seems rather clear now thanks to the effort of many people. Suppose that $I$ is a homogeneous ideal in a polynomial ring. Swason \cite{Swanson:PowOfIdelas} first observed that for $t$ sufficiently large $\reg I^t$ is bounded by a linear function $dt+e$ for some constant $d$ and $e$. An effective result was soon established independently by Kodiyalam \cite{Kodiyalam:AymReg} and Cutkosky, Herzog  and Trung \cite{Cutkosky:AsymReg}, which says that for $t$ sufficiently large one actually can get $$\reg I^t=dt+e.$$ The slope $d$ in the formula has a concrete algebraic meaning in the result. However much less is known about the constant $e$ in the equality even recently.

Turning to the geometric case, suppose that $\sI$ is an ideal sheaf on the projective space $\nP^n$. The shape of the asymptotic regularity of $\sI$ was first described by Cutkosky, Ein and Lazarsfeld \cite{Ein:PosiComlIdeal} in the formula $$\lim_{t\rightarrow \infty}\frac{\reg\sI^t}{t}=s.$$
The number $s$ is called the $s$-invariant of $\sI$ and measures, roughly speaking, the positivity of $\sI$. Thus this formula gives us an impression that the positivity of an ideal sheaf controls its asymptotic regularity. Several examples in \cite{Cutkosky:AsymReg} and \cite{Ein:PosiComlIdeal} have shown that $s$ could be an irrational number which prevents the asymptotic regularity from being linear.  In general, the best one can hope for, is that the asymptotic regularity is bounded by linear functions \cite{Niu:AVanishing}. Since usually the Rees algebra $\oplus \sI^t$ is not finitely generated it seems very hard that the asymptotic regularity is linear. However a recent work of Chardin \cite{Chardin:Powersofideals} showed a surprising result that $\reg \sI^t$ could be linear for $t$ sufficiently large.

In order to understand Chardin's result and put those linear and nonlinear phenomena together in a clear picture, we give the following linearity theorem of asymptotic regularity.
\begin{theorem}\label{intr:01} Let $d$ be the minimal number such that $\sI(d)$ is generated by its global sections and $s$ be the $s$-invariant of $\sI$. If $s=d$  then for $t$ large enough,  one has $\reg \sI^t=dt+e$ for some constant $e\geq 0$.
\end{theorem}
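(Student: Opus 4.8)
The plan is to leverage the two inequalities governing asymptotic regularity and show they pinch together when $s=d$. On the one hand, since $\sI(d)$ is globally generated, standard regularity estimates for powers give an upper bound of the shape $\reg\sI^t\leq dt+e'$ for some constant $e'$ and all $t\gg 0$: global generation of $\sI(d)$ forces $\sI^t(dt)$ to be globally generated, and combined with a Mumford-type regularity bound this yields linear control with slope exactly $d$. On the other hand, the defining property of the $s$-invariant via the formula $\lim_{t\to\infty}\reg\sI^t/t=s=d$ shows the slope cannot be smaller than $d$ asymptotically. Thus I expect $\reg\sI^t$ to be trapped between $dt+c_1$ and $dt+c_2$ for constants $c_1\leq c_2$ once $t$ is large.

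The first concrete step is to establish the upper bound $\reg\sI^t\leq dt+e'$. I would pass to the blow-up or use the surjection $(\sI(d))^{\otimes t}\twoheadrightarrow \sI^t(dt)$ coming from global generation, so that $\sI^t(dt)$ is globally generated, hence $0$-regular up to a correction measured by the higher cohomology of $\sI^t$; controlling this correction uniformly in $t$ is where I would invoke a vanishing theorem (Fujita's vanishing, available in the paper's toolkit) to bound the relevant $H^i(\sI^t(dt-i))$ by a fixed constant shift. The second step is the matching lower bound: the limit formula gives $\reg\sI^t\geq dt-o(t)$, but I actually need $\reg\sI^t\geq dt+c_1$ with a genuine constant, not merely asymptotic slope $d$. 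I would obtain this from the bottom cohomology, namely the local cohomology module $H^0_{\mathfrak m}$ or the $H^1$ term whose nonvanishing degree grows exactly linearly with slope $d$; here the hypothesis $s=d$ is essential because it forces the positivity that produces nonvanishing at degree close to $dt$.

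The decisive move is to upgrade the two-sided linear bounds into an actual equality $\reg\sI^t=dt+e$ for a single constant $e$. The key idea is that $\reg\sI^{t+1}$ and $\reg\sI^t$ differ by a controlled amount: multiplication by the globally generated module in degree $d$ gives $\reg\sI^{t+1}\leq \reg\sI^t+d$, while the reverse estimate comes from the positivity encoded in $s=d$. Since the difference $\reg\sI^t-dt$ is then bounded both above and below and is (eventually) monotone or at least asymptotically stable, it must stabilize to a constant $e$ for $t$ large; the sign constraint $e\geq 0$ follows because $\reg\sI^t\geq dt$ whenever $\sI^t$ has a nonzero generator in degree $dt$, which global generation of $\sI(d)$ guarantees.

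The main obstacle I anticipate is precisely this last stabilization: two-sided linear bounds with the \emph{same} slope $d$ do not by themselves force the offset sequence $a_t:=\reg\sI^t-dt$ to be eventually constant — they only show $a_t$ is bounded. The real work is to prove $a_t$ is eventually \emph{nonincreasing} (or to exhibit exact subadditivity $a_{s+t}\leq a_s+a_t$ together with a lower bound forcing convergence), which requires a genuine structural input rather than numerical positivity alone. I expect this to hinge on the finite generation of an auxiliary graded module built from the powers $\sI^t$, or on Chardin's circle of ideas, so that $a_t$ becomes the degree of a fixed graded piece and hence eventually constant; pinning down this finite-generation/stabilization mechanism in the sheaf setting, where $\bigoplus\sI^t$ need not be Noetherian, is the crux of the proof.
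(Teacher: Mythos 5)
Your outline correctly isolates the crux --- that two-sided bounds $dt+c_1\leq \reg\sI^t\leq dt+c_2$ only give boundedness of $a_t=\reg\sI^t-dt$, not convergence --- but you then leave exactly that step unproved, gesturing at ``finite generation of an auxiliary graded module'' without exhibiting one. This is a genuine gap, and the intermediate claims you offer in its place do not close it: the subadditivity $\reg\sI^{t+1}\leq\reg\sI^t+d$ is not a standard fact for products of ideal sheaves and is not justified; and there is no argument that $a_t$ is eventually monotone. (Your lower bound is also more complicated than necessary: when $s=d$ one gets $\reg\sI^t\geq dt$ directly, since $\reg\sI^t=dt-a$ with $a>0$ would make $\sI^t(dt-a)$ globally generated, hence $(dt-a)\mu^*H-tE$ nef on the blow-up and $s\leq (dt-a)/t<d$.)

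The structural input you are missing is supplied in the paper by working on the biprojective space $Y=\nP^n\times\nP(V)$ containing the blow-up $W=\Bl_{\sI}\nP^n$, where $V=H^0(\sI(d))$. Coherence of $\sO_W$ on $Y$ gives it a well-defined biregularity region, which is stable under translation by $\nN^2$, and the key identity $H^i(Y,\sO_W(a,t))=H^i(\nP^n,\sI^t(dt+a-i))$ (for $t\gg 0$, via $\sO_W(0,1)=\sO_W(d\mu^*H-E)$) converts statements about $e_t$ into statements about this region. Setting $e=b_1(\sO_W)$, the first coordinate threshold of the biregularity region, one gets $\limsup e_t\leq e$ from $(e,l)+\nN^2$-regularity, and $e\leq\liminf e_t$ by showing that a subsequence with $e_{t_j}=\liminf e_t=:r$ forces $R^iq_*\sO_W(r-i)=0$, i.e.\ $\sO_W$ is relative $r$-regular for $q:Y\to\nP(V)$; the equality $b_1(\sO_W)=\reg_q\sO_W$ (the paper's Proposition relating biregularity to relative regularity via the Leray spectral sequence and Serre vanishing) then pinches $e_t\to e$. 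The hypothesis $s=d$ enters twice: it gives $e_t\geq 0$, and it guarantees $e\neq-\infty$ (equivalently, that $\pi:W\to\nP(V)$ is not finite). The remark in the paper that $\bigoplus\sI^t$ need not be finitely generated is precisely why your hoped-for graded-module argument is delicate; the coherent sheaf $\sO_W$ on $Y$ is the Noetherian object that replaces it.
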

\noindent The meaning of the constant $e$ can be explained in terms of the relative regularity and the biregularity of the blowing-up of $\nP^n$ along $\sI$ (for more details see Section 2). Thus this theorem strengthens Chardin's result and justifies our intuition that positivity controls asymptotic regularity.

As a quick corollary of the theorem, we give a typical variety having linear asymptotic regularity as follows. It is not so obvious and also tells us that varieties having linear asymptotic regularity are in fact not so rare.
\begin{corollary} Suppose that $\sI$ defines a (nonlinear) variety cut out by quadrics, then its asymptotic regularity is linear, i.e., for $t\gg 0$, one has
$\reg \sI^t=2t+e,$
for some positive integer $e\geq 0$.
\end{corollary}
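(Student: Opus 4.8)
The plan is to deduce the corollary directly from Theorem \ref{intr:01} by checking that its hypothesis $s=d$ holds with the common value $2$; the statement then follows with the same $e\ge 0$. First I would pin down $d$. Since $X$ is cut out by quadrics, $\sI$ is globally generated in degree $2$, i.e. $\sI(2)$ is globally generated, so $d\le 2$. If $\sI(1)$ were globally generated, then $\sI$ would be generated by the linear forms it contains and $X$ would be a linear subspace; as $X$ is nonlinear this is impossible, so $d\ge 2$ and hence $d=2$.

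Next I would bound the $s$-invariant. Let $\pi\colon W\to\nP^n$ be the normalized blow-up of $\sI$, write $\sI\cdot\sO_W=\sO_W(-E)$ and $H=\pi^*\sO_{\nP^n}(1)$, and recall from Section 2 that $s$ is the least real number for which $sH-E$ is nef. The global sections of $\sI(2)$ generate $\sI(2)$, and pulling them back along $\pi$ exhibits $\sO_W(2H-E)$ as globally generated, hence nef; this already gives $s\le 2=d$.

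The main point, and the step I expect to require the most care, is the reverse inequality $s\ge 2$, for which I would exhibit a curve $\Gamma\subset W$ with $(2H-E)\cdot\Gamma=0$ and $H\cdot\Gamma>0$: any such curve forces $(cH-E)\cdot\Gamma<0$ for $c<2$, so $cH-E$ is not nef and $s\ge 2$. Because $X$ is nonlinear there exist points $p\ne q$ of $X$ whose secant line $\ell=\overline{pq}$ is not contained in $X$, so $\ell\cap X$ is finite and contains $p$ and $q$. Every quadric $Q$ through $X$ restricts on $\ell\cong\nP^1$ to a degree-two form vanishing at $p$ and $q$, and such forms span a one-dimensional space; hence the rational map $\nP^n\dashrightarrow\nP^N$ defined by the quadrics through $X$, resolved by $\pi$ into a morphism $\phi$ with $\phi^*\sO_{\nP^N}(1)=2H-E$, is constant along the strict transform $\Gamma=\tilde\ell$. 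Thus $\Gamma$ is contracted, giving $(2H-E)\cdot\Gamma=0$, while $H\cdot\Gamma=\deg\ell=1>0$. The only subtlety to nail down here is that $\ell$ is a genuine secant (guaranteed by nonlinearity) and that the restricted quadrics are not simultaneously zero, so that $\Gamma$ is honestly contracted to a point.

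Combining the two inequalities gives $s=d=2$, and Theorem \ref{intr:01} then yields $\reg\sI^t=2t+e$ for $t\gg0$ with $e\ge 0$, as claimed. The nonlinearity hypothesis enters twice, to force $d\ge 2$ and to supply the contracted secant line, so I would flag it as the feature separating this case from that of a linear subspace, where instead $s=d=1$.
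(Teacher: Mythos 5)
Your proposal is correct and follows essentially the same route as the paper: the paper likewise gets $s\le 2$ from the quadrics cutting out $X$ and $s\ge 2$ from a generic secant line meeting $X$ in two distinct points, then invokes Theorem \ref{thm:02}. Your write-up merely makes explicit the two points the paper leaves implicit --- that $d=2$ (needed to apply the theorem, and where nonlinearity enters) and the intersection-number computation $(2H-E)\cdot\tilde\ell=0$ behind the phrase ``this shows $s\ge 2$'' --- so there is no substantive difference.
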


As another application of our linearity theorem, we can slightly strengthen a theorem due to Vermiere \cite{Vermeire:RegPowers} which gives the asymptotic regularity of a curve embedded by a line bundle of large degree. In this situation, the constant $e$ can be determined explicitly as either $0$ or $1$ depending on the surjectivity of the corresponding Gauss map (see Proposition \ref{pro:05}).

Let us further assume that $\sI$ defines a projective subvariety  $X$ in $\nP^n$. The embedding of $X$ is determined by a very ample line bundle $L$ on $X$. The syzygies of $X$ therefore heavily depend on the positivity of $L$. An efficient way to describe such embedding is using Green's condition $(N_p)$. We hope that knowing the syzygy of $X$ through $(N_p)$ condition could give information about the asymptotic regularity of $X$.

The first case when $X$ is a nonsingular projective curve has been investigated by Vermiere \cite{Vermeire:RegPowers}. He showed that if $X$ is embedded by $L$ of degree $\geq 2g+3$, where $g$ is the genus of $X$, then $\sI^t$ is $(2t+1)$-regular for $t\geq 1$. According to the theorem of Green and Lazarsfeld \cite{Lazarsfeld:SyzFiniteSet} such $L$ satisfies at least  Property $(N_2)$. For higher dimensional nonsingular projective variety $X$, we consider the case that the embedding is determined by the following  adjoint line bundle
$$L_d=K_X+dA+B,$$
where $K_X$ is the canonical bundle of $X$, $A$ is a very ample line bundle and $B$ is a nef line bundle. A theorem of Ein and Lazarsfeld \cite{Ein:SyzygyKoszul} shows that if $d\geq \dim X+1+p$, then $L_d$ satisfies Property $(N_p)$. Thus an interesting question is up to which extend of the positivity of $L_d$ one could know the asymptotic regularity of $X$.

The crucial point to get such asymptotic regularity bounds is to establish a vanishing theorem of the tensor algebra of the conormal bundle of $X$ (see Section 3). This idea is quite straightforward and has been used by Vermeire in the aforementioned work. As a general result we show the following theorem by assuming that $\dim X\geq 2$, that for the sufficient positive line bundle $L_d$ we can get regularity bounds for powers of an ideal sheaf.

\begin{theorem} Assume that $d\geq 2(\dim X+1)$ in the adjoint line bundle $L_d$. Then for any $t\geq 1$, the ideal sheaf $\sI^t$ is $(2t+2\dim X-2)$-regular.
\end{theorem}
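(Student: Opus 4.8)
The plan is to reduce the statement to a sharp vanishing theorem for the symmetric powers of the conormal bundle of $X$, and then to prove that vanishing by induction on the symmetric power, feeding the adjoint positivity of $L_d$ into Kodaira--Akizuki--Nakano vanishing. Write $r=\dim X$, let $L_d=\sO_X(1)$ be the hyperplane bundle, and let $N=N_{X/\nP^n}$ be the normal bundle with dual $N^\vee=\sI/\sI^2$, the conormal bundle. Because $X$ is nonsingular it is a local complete intersection in $\nP^n$, so $\sI^{k}/\sI^{k+1}\cong S^{k}(N^\vee)$ for all $k\ge 0$. The backbone is the family of exact sequences $0\to \sI^{t}\to \sI^{t-1}\to S^{t-1}(N^\vee)\to 0$ together with the elementary fact that for $0\to\sF'\to\sF\to\sF''\to 0$ one has $\reg\sF'\le\max\{\reg\sF,\ \reg\sF''+1\}$.

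Granting the vanishing below, the theorem follows by induction on $t$. The base case $t=1$ uses $0\to\sI\to\sO_{\nP^n}\to\sO_X\to 0$, giving $\reg\sI\le\max\{0,\reg\sO_X+1\}\le 2r$ once we know $\reg\sO_X\le 2r-1$. For the inductive step, assuming $\reg\sI^{t-1}\le 2(t-1)+2r-2$, the sequence above gives
\[
\reg\sI^{t}\le\max\bigl\{\,2t+2r-4,\ \reg S^{t-1}(N^\vee)+1\,\bigr\},
\]
so the whole statement reduces to the following \emph{conormal vanishing}: for every $k\ge 0$ one has $\reg S^{k}(N^\vee)\le 2k+2r-1$, equivalently $H^{i}\bigl(X,\,S^{k}(N^\vee)\otimes L_d^{\,2k+2r-1-i}\bigr)=0$ for $1\le i\le r$ (the case $k=0$ being exactly $\reg\sO_X\le 2r-1$).

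To prove the conormal vanishing I would first dispose of $k=0$ by pure Kodaira vanishing: for $1\le i\le r$ the twist is $L_d^{\,2r-1-i}$, and $(2r-1-i)L_d-K_X=(2r-2-i)L_d+dA+B$ is ample (nef plus ample), so $H^i(X,\sO_X(2r-1-i))=0$. For general $k$ I would induct, exploiting the conormal sequence $0\to N^\vee\to\Omega^1_{\nP^n}|_X\to\Omega^1_X\to 0$ and the induced filtration on $S^{k}(\Omega^1_{\nP^n}|_X)$ whose graded pieces are $S^{j}(N^\vee)\otimes S^{k-j}(\Omega^1_X)$, with $S^{k}(N^\vee)$ the bottom step. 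The cohomology of $S^{k}(\Omega^1_{\nP^n}|_X)\otimes L_d^{a}$ is computed, via the restricted Euler sequence, purely from twists $L_d^{\,b}$ and is annihilated by Kodaira vanishing in the required range; the correction terms $S^{j}(N^\vee)\otimes S^{k-j}(\Omega^1_X)\otimes L_d^{a}$ are controlled by the inductive hypothesis on $k$ for the $N^\vee$-factor and by Kodaira--Akizuki--Nakano vanishing for the $\Omega^{p}_X$-factor. The positivity making all of this run is the ampleness of $N$, which follows because $T_{\nP^n}(-1)|_X$ is globally generated, whence so is the quotient $N(-1)$, and $N=N(-1)\otimes L_d$ is then ample.

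The main obstacle is precisely this conormal vanishing, and within it the tightest point is the top cohomological degree $i=r$ paired with the smallest admissible twist $L_d^{\,2k+r-1}$. This is exactly where the hypothesis $d\ge 2(\dim X+1)$ is consumed: after peeling off $K_X$ to apply Kodaira/KAN to the various $\Omega^{p}_X$-twisted pieces, one is left needing line bundles of the shape $L_d^{\,b}\otimes K_X^{-1}\otimes(\text{nef})$ to be genuinely ample in every degree $1\le i\le r$, and in the extremal case the bound on $d$ is what guarantees this. I expect the delicate bookkeeping to be the uniform (in $k$) control of the $\Omega^{p}_X$-twisted correction terms across the filtration, ensuring the sharp slope $2k$ rather than the cruder bound a black-box Griffiths-type estimate would give; the linear reduction in the first two paragraphs is routine by comparison.
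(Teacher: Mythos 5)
The central step of your reduction rests on the claimed ``elementary fact'' that for a short exact sequence $0\to\sF'\to\sF\to\sF''\to 0$ of sheaves on $\nP^n$ one has $\reg\sF'\le\max\{\reg\sF,\ \reg\sF''+1\}$. This is false for sheaves: the connecting map $H^0(\sF''(k))\to H^1(\sF'(k))$ need not vanish, so $m$-regularity of $\sF$ and $(m-1)$-regularity of $\sF''$ do not control $H^1(\sF'(m-1))$ unless $H^0(\sF(k))\to H^0(\sF''(k))$ is surjective in the relevant degree. (Already for $\sF=\sO_{\nP^2}$ and $\sF''$ the structure sheaf of three collinear points, both sheaves are $0$-regular, so the right-hand side is at most $1$, yet $\reg\sI_X=3$ because the points fail to impose independent conditions on linear forms.) In the present situation the needed surjectivity is that of $\phi_a\colon H^0(\sI^a(k))\to H^0(S^aN^*(k))$, and establishing it is the bulk of the paper's argument: it comes from a diagram chase through the maps $c_a$, $w_a$, $u_{a+1}$ of Section 3, whose surjectivity in turn requires the projective normality of $X$ and the linearity of the first $\dim X$ steps of the minimal free resolution of $\sI$. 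The latter is exactly where the hypothesis $d\ge 2(\dim X+1)$ enters, via Ein--Lazarsfeld's theorem that $L_d$ then satisfies Property $(N_{\dim X+1})$. Your proposal never invokes Property $(N_p)$ and consumes the hypothesis on $d$ only through Kodaira-type vanishing; this is a sign that the mechanism has been lost, since the slope $2$ in the answer comes from quadratic generation and linear syzygies, not from ampleness of $L_d^{\,b}\otimes K_X^{-1}$.

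There is a second gap in your route to the ``conormal vanishing.'' Kodaira--Akizuki--Nakano vanishing concerns the exterior powers $\Omega^p_X$, not the symmetric powers $S^{m}\Omega^1_X$ appearing in the graded pieces $S^{j}N^*\otimes S^{k-j}\Omega^1_X$ of your filtration of $S^k(\Omega^1_{\nP^n}|_X)$; there is no uniform-in-$m$ vanishing theorem for $S^m\Omega^1_X$ twisted by a line bundle growing only linearly in $k$, and the inductive hypothesis on $S^jN^*$ alone does not control these mixed tensor products. The paper avoids this entirely: it restricts the minimal free resolution of $\sI$ to $X$, observes that the resulting complex resolves $N^*$ with homology sheaves $\wedge^{i+1}N^*$, and applies the hypercohomology descent of Lemma \ref{pro:31} to bound the cohomology of all tensor powers $T^aN^*$ degree by degree, with the partial regularity $r_p=2$ supplied by $(N_p)$. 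To salvage your outline you would need to (i) replace the regularity inequality by a proof of surjectivity of $\phi_a$ on global sections, and (ii) replace the conormal-sequence filtration by an argument that actually uses the syzygies of $\sI$.
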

By the result of Ein and Lazarsfeld, such $L_d$ in the theorem satisfies Property $(N_{\dim X+1})$ and therefore $X$ is cut out by quadrics. Applying the linearity theorem \ref{intr:01} above, we see that the constant $e$ satisfies $0\leq e\leq 2\dim X-2$. In particular if $X$ is a surface, we have $0\leq e\leq 2$.

In the same way, we can also have an interesting result on regularity bounds for powers of $\sI$ if $X$ is a locally complete intersection (equidimensional) and $\sI$ is $r$-regular.

\begin{theorem} Assume that $X\subset \nP^n$ is a local complete intersection defined by $\sI$ and assume that $\sI$ is $r$-regular.
\begin{enumerate}
\item If $\dim X=1$, then for any $t\geq 1$, $\sI^t$ is $rt$-regular.
\item If $\dim X=2$, then for any $t\geq 1$, $\sI^t$ is $rt+r-2$-regular.
\item If $\dim X\geq 3$, then for any $t\geq 1$, $\sI^t$ is $rt+\max(r,(\dim X -1)r-\dim X)$ regular.
\end{enumerate}
\end{theorem}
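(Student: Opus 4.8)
The plan is to reduce the regularity of the powers $\sI^t$ to a single regularity estimate for the symmetric powers of the conormal bundle, and then to feed that estimate into the vanishing theorem established in Section~3. First I would exploit the local complete intersection hypothesis. Since $X$ is a local complete intersection, the conormal sheaf $N^{\vee}:=\sI/\sI^2$ is locally free of rank $c=n-\dim X$ on $X$, and the natural surjections $\operatorname{Sym}^j(N^{\vee})\twoheadrightarrow \sI^j/\sI^{j+1}$ are isomorphisms for every $j\ge 0$. Consequently $\sO_{\nP^n}/\sI^t$ carries a finite filtration
$$0=\sI^{t}/\sI^{t}\subset \sI^{t-1}/\sI^{t}\subset\cdots\subset \sI/\sI^{t}\subset \sO_{\nP^n}/\sI^{t}$$
whose successive quotients are the sheaves $\operatorname{Sym}^j(N^{\vee})$ for $0\le j\le t-1$, pushed forward from $X$. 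Running the long exact cohomology sequences along this filtration, together with the sequence $\uses{\sI^{t}}{\sO_{\nP^n}}{\sO_{\nP^n}/\sI^{t}}$, reduces the claim to the single estimate
$$\reg\bigl(\operatorname{Sym}^j(N^{\vee})\bigr)\le rj+c'\qquad(0\le j\le t-1),$$
since regularity satisfies $\reg B\le\max(\reg A,\reg C)$ on a short exact sequence $\uses{A}{B}{C}$ and one has $\reg\sI^t\le \reg(\sO_{\nP^n}/\sI^t)+1$. As $rj+c'$ is increasing in $j$, its maximum over $j\le t-1$ is attained at $j=t-1$, which produces the slope $r$ and leaves a constant to be identified case by case.

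Next I would prove the estimate $\reg\operatorname{Sym}^j(N^{\vee})\le rj+c'$. Because $\sI$ is $r$-regular, $\sI(r)$ is globally generated, so $N^{\vee}(r)$ is globally generated, and via $\uses{\sI}{\sO_{\nP^n}}{\sO_X}$ the structure sheaf $\sO_X$ is $(r-1)$-regular. To bound $H^a\bigl(\operatorname{Sym}^j(N^{\vee})(k)\bigr)$ I would separate the two extremes. For the intermediate range $1\le a\le \dim X-1$ I would invoke the vanishing theorem of Section~3 for the tensor algebra of the conormal bundle, applied to the globally generated twist $N^{\vee}(r)$; this controls the twist required and contributes a constant of size $r$. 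For the top degree $a=\dim X$ I would pass to Serre duality on $X$, using $\omega_X=\sO_X(-n-1)\otimes\det N$, to rewrite
$$H^{\dim X}\bigl(\operatorname{Sym}^j(N^{\vee})(k)\bigr)\cong H^0\bigl(\operatorname{Sym}^j(N)\otimes\det N\otimes\sO_X(\dim X-n-1-k)\bigr)^{\vee},$$
and then bound the indicated space of sections using the positivity of the very ample embedding. This is where the term $(\dim X-1)r-\dim X$ is produced.

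Finally I would assemble the three cases by setting $c'$ equal to the larger of the two thresholds above and translating back through $\reg\sI^t\le r(t-1)+c'+1$. For $\dim X=1$ only the top degree occurs and one gets the bound $rt$; for $\dim X=2$ the two thresholds combine to give $rt+r-2$; and for $\dim X\ge 3$ the competition between the intermediate contribution (of size $r$) and the top-degree Serre-duality contribution (of size $(\dim X-1)r-\dim X$) yields the $\max$ appearing in the statement. I expect the main obstacle to be precisely the top-degree estimate of the previous paragraph: quantifying the global sections of $\operatorname{Sym}^j(N)\otimes\det N$, twisted down by powers of $\sO_X(1)$, against the positivity coming from the embedding is the delicate point, and it is this estimate that forces the dimension-dependent constant rather than a clean slope-$r$ linear bound.
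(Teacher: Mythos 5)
Your reduction has a genuine gap at the very first step. The inequality $\reg\sI^t\le\reg(\sO_{\nP^n}/\sI^t)+1$ is false in general. Writing the long exact sequence of $\uses{\sI^t}{\sO_{\nP^n}}{\sO_{\nP^n}/\sI^t}$ twisted by $k$, the terms $H^i(\sI^t(k))$ with $i\ge 2$ are indeed squeezed between cohomology of $\sO_{\nP^n}$ and of $\sO_{\nP^n}/\sI^t$, so your filtration by the sheaves $S^j(N^{\vee})$ together with the Section~3 vanishing theorem does control those. But the term $H^1(\sI^t(k))$ sits in
$$H^0(\sO_{\nP^n}(k))\longrightarrow H^0\bigl((\sO_{\nP^n}/\sI^t)(k)\bigr)\longrightarrow H^1(\sI^t(k))\longrightarrow 0,$$
so its vanishing is equivalent to the $k$-normality of the $t$-th thickening, and this is \emph{not} implied by the sheaf-theoretic regularity of $\sO_{\nP^n}/\sI^t$. (Already for $t=1$: a large set of general points in $\nP^2$ has $\sO_X$ of regularity $0$ as a sheaf, while $\reg\sI_X$ is unbounded.) This is precisely the point the paper isolates as ``the crucial point'': it proves, by induction on $a$ through the sequences $\uses{\sI^{a+1}}{\sI^a}{S^aN^*}$, that the restriction maps $\phi_a:H^0(\sI^a(k))\to H^0(S^aN^*(k))$ are surjective for $k$ in the stated range (Proposition \ref{pro:04}, resting on the surjectivity of the auxiliary maps $c_a$, $w_a$, $u_{a+1}$ in Lemmas \ref{pro:02} and \ref{pro:03}); the cokernel of $\phi_a$ is exactly $H^1(\sI^{a+1}(k))$ modulo $H^1(\sI^a(k))$, which is what makes the induction close. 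Your proposal contains no substitute for this surjectivity argument, so as written it only bounds $H^i(\sI^t(k))$ for $i\ge 2$ and does not prove the theorem.

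On the vanishing side your plan is closer to the paper, though organized differently: the paper obtains all the cohomology vanishing for $T^aN^*$ (hence for the direct summands $S^aN^*$ and $\wedge^iN^*\otimes S^aN^*$), including the top-degree $H^{\dim X}$, from the non-exact complex obtained by restricting the minimal free resolution of $\sI$ to $X$, whose homology sheaves are $\wedge^{i+1}N^*$, combined with Lemma \ref{pro:31}; it never invokes Serre duality or an analysis of $H^0(\operatorname{Sym}^j(N)\otimes\det N\otimes\sO_X(\ast))$. Your duality route for the top cohomology might be made to work, but it is the harder and vaguer path, and in any case it does not repair the missing normality step above.
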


This paper is organized as follows. We first prove the linearity theorem of asymptotic regularity in Section 2. Then we build a vanishing theorem for tensor products of the conormal bundle of a variety in Section 3. In Section 4 and 5 we apply our vanishing theorem to get regularity bounds for powers of ideal sheaves.

\vspace{0.5cm}

\noindent{\em Acknowledgement.} Special thanks are due to the author's advisor Lawrence Ein who offers a lot of help and suggestions. The author also thanks Marc Chardin and Pete Vermeire for their explanations on their results and useful discussions and suggestions.

\section{Linearity of asymptotic regularity}

\noindent In this section, we study the linearity of the asymptotic regularity of an ideal sheaf on the projective space. Throughout, we fix our notation as follows. Suppose that $\sI$ is an ideal sheaf on $\nP^n$. Let $d$ be the minimal number such that $\sI(d)$ is generated by its global sections. We denote by $H$ the hyperplane divisor of $\nP^n$. We shall use line bundle and divisor interchangeable if there is confuse likely. Consider the blowing-up
$$\mu: W=\bl_{\sI}\nP^n\longrightarrow \nP^n, $$
with an exceptional divisor $E$ such that $\sI\cdot
\sO_W=\sO_W(-E)$. We denote by $V=H^0(\sI(d))$ the vector space of global sections of $\sI(d)$. Those sections determine a surjective morphism
$$V\otimes\sO_{\nP^n}(-d)\longrightarrow \sI\longrightarrow 0.$$
Via the surjective morphism from the symmetric algebra of $\sI$ to its Rees algebra, the morphism above gives the embedding of the blowing-up $W$ in the biprojective space
$$W\hookrightarrow \nP(V\otimes\sO_{\nP^n}(-d))\cong\nP^n\times\nP(V).$$
Note that $\sO_W(d\mu^*H-E)$ is generated by its global sections. Let $p$ and $q$ be projections of $\nP^n\times \nP(V)$ to its components, we then have the diagram
\begin{equation}\label{eq:9}
\xymatrix{
W=\bl_{\sI}\nP^n \ar[dr]^{\mu} \ar@/^1.5pc/[rr]|\pi \ar@{^{(}->}[r] & \nP^n\times \nP(V) \ar[d]^p \ar[r]^q & \nP(V) \\
  & \nP^n.}
\end{equation}
On the biprojective space $Y=\nP^n\times \nP(V)$, for any coherent sheaf $\sF$, we denote by $$\sF(a,b)=\sF\otimes p^*\sO_{\nP^n}(a)\otimes q^*\sO_{\nP(V)}(b).$$ Under such notation, we see that
\begin{equation}\label{eq:10}
\sO_Y(0,1)|_W=\sO_W(0,1)=\pi^*\sO_{\nP(V)}(1)=\sO_W(d\mu^*H-E).
\end{equation}

Now we recall some basic definitions. The first one is the $s$-invariant of $\sI$, which measures its positivity. See \cite[Section 5.4]{Lazarsfeld:PosAG1} for details.

\begin{definition} The {\em $s$-invariant} of
$\sI$ (with respect to the divisor $H$) is the positive real number
$s(\sI)=\min\{\ s\ |\ s\mu^*H-E \ \mbox{ is nef }\}$.
Here $ s\mu^*H-E$ is considered as an $\nR$-divisor on $W$.
\end{definition}

It is easy to see that $s\leq d$ since $\sI(d)$ is generated by its global section and then the divisor $\sO_W(d\mu^*H-E)$ is nef.

There are two generalized notions of regularity: biregularity on a biprojective space and relative regularity on a projective bundle.

\begin{definition}\label{def:1} Let $\sF$ be a coherent sheaf on a biprojective space $Y=\nP^a\times\nP^b$. We say that $\sF$ is $\textbf{m}=(m_1,m_2)$-regular if
$$H^i(Y,\sF(m_1-u,m_2-v))=0$$
for all $i>0$ and $u+v=i$ where $(u,v)\in \nN^2$ (we assume $0\in \nN$).
\end{definition}

Denote by $\bireg \sF$ the set of the pair $\textbf{m}$ such that $\sF$ is $\textbf{m}$-regular. We also denote by $b_1(\sF)=\min\{m_1|\ \textbf{m}=(m_1,m_2)\in \bireg \sF\}$ and $b_2(\sF)=\min\{m_1|\ \textbf{m}=(m_1,m_2)\in \bireg \sF\}$. Note that $b_1$ and $b_2$ could be $-\infty$. The basic property of biregualrity we will use is that if $\sF$ is $\textbf{m}$-regular, then it is $\textbf{m}+\nN^2$-regular.

\begin{definition} Let $X$ be a variety and $\sE$ be a vector bundle on $X$, with the projectivization $\pi: \nP(\sE)\rightarrow X$. A coherent sheaf $\sF$ on $\nP(\sE)$ is $m$-regular with respect to $\pi$ if
$$R^i\pi_*(\sF\otimes\sO_{\nP(\sE)}(m-i))=0$$
for $i>0$.
\end{definition}

Denote by $\reg_{\pi}\sF$ the minimal number $m$ such that $\sF$ is relative $m$-regular with respect to $\pi$. Note that $\reg_{\pi}\sF$ could be $-\infty$. If $\sF$ is relative $m$-regular, then it is relative $(m+1)$-regular.

\begin{proposition} Let $X$ be a variety and $\sE$ be a locally free sheaf with the projectivization $\pi:\nP(\sE)\rightarrow X$.  Suppose that $\sF$ is a coherent sheaf on $\nP(\sE)$ and let $c$ be a nonnegative integer. Then
$R^i\pi_*\sF(k)=0\mbox{ for all }i>c, k\in \nZ$ if and only if for any $x\in X$, the restriction $\sF_x$ of $\sF$ on the fiber over $x$ satisfies $\dim \supp\sF_x\leq c$.
\end{proposition}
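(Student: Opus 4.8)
The plan is to prove both implications via the theorem on formal functions, combining it with Grothendieck's vanishing theorem for the direction ($\Leftarrow$) and with a single-fibre Serre duality computation for the converse. Since forming $R^i\pi_*$ commutes with restriction to open subsets and the two conditions are both local on $X$, I may assume $X$ is affine and argue near a point. Write $Z=\supp\sF\subset\nP(\sE)$; because twisting by $\sO(k)$ leaves the underlying support unchanged, $\supp(\sF(k))_x=\supp\sF_x=Z_x:=Z\cap\pi^{-1}(x)$ for every $x$, so the hypothesis $\dim\supp\sF_x\le c$ says exactly that the proper morphism $\pi|_Z\colon Z\to X$ has all fibres of dimension $\le c$.

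For ($\Leftarrow$), assume $\dim Z_x\le c$ for all $x$, and fix $k$ and $i>c$. By the theorem on formal functions the completion of $R^i\pi_*\sF(k)$ at a point $x$ is $\varprojlim_n H^i(F_n,\sF(k)_n)$, where $F_n$ is the $n$-th infinitesimal neighborhood of $F=\pi^{-1}(x)$ and $\sF(k)_n=\sF(k)\otimes\sO_{F_n}$. Each $\sF(k)_n$ has topological support inside $Z_x$, a space of dimension $\le c$, so Grothendieck's vanishing theorem forces $H^i(F_n,\sF(k)_n)=0$ for $i>c$. Hence every completion vanishes and $R^i\pi_*\sF(k)=0$, as desired. (Equivalently one may cite the standard fact that $R^if_*=0$ above the maximal fibre dimension of the support of a coherent sheaf.)

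For the converse ($\Rightarrow$) I argue by contraposition: suppose some fibre satisfies $\dim Z_{x_0}=e\ge c+1$, and I will produce a twist with $R^{e}\pi_*\sF(k)\ne0$. The first step is a single-fibre nonvanishing statement on $F=\pi^{-1}(x_0)\cong\nP^{r-1}$, with $r=\rank\sE$: the coherent sheaf $\sF_{x_0}$ has support of dimension $e$, and I claim $H^{e}(F,\sF_{x_0}(k))\ne0$ for $k\ll0$. I would prove this by Serre duality, identifying $H^{e}(F,\sF_{x_0}(k))^{\vee}$ with $\Ext^{r-1-e}(\sF_{x_0},\omega_F(-k))$ and then, through the local-to-global spectral sequence, with $H^0\bigl(F,\sExt^{r-1-e}(\sF_{x_0},\omega_F)(-k)\bigr)$, since $\sExt^{j}(\sF_{x_0},\omega_F)=0$ for $j<\codim\supp\sF_{x_0}=r-1-e$; the surviving sheaf $\sExt^{r-1-e}(\sF_{x_0},\omega_F)$ is nonzero of dimension $e$, so it acquires global sections once twisted by $\sO(-k)$ with $-k\gg0$.

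The second step promotes this to nonvanishing of the direct image, and this is the step I expect to be the main obstacle, because $\sF$ need not be flat over $X$ and so the usual cohomology-and-base-change theorems do not apply directly. I would bypass flatness using formal functions once more: from the sequences $0\to\mf{m}^n\sF(k)/\mf{m}^{n+1}\sF(k)\to\sF(k)_{n+1}\to\sF(k)_n\to0$, together with the vanishing of $H^{e+1}$ of the graded pieces (whose support again lies in the $e$-dimensional set $Z_{x_0}$), the transition maps $H^{e}(F_{n+1},\sF(k)_{n+1})\to H^{e}(F_{n},\sF(k)_{n})$ are all surjective. Since $H^{e}(F_1,\sF(k)_1)=H^{e}(F,\sF_{x_0}(k))\ne0$ for $k\ll0$, this surjective inverse system has nonzero limit, so the completion of $R^{e}\pi_*\sF(k)$ at $x_0$ is nonzero; hence $R^{e}\pi_*\sF(k)\ne0$ with $e>c$, contradicting the hypothesis and completing the contrapositive.
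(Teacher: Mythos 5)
Your proof is correct and follows essentially the same strategy as the paper's: formal functions plus Grothendieck vanishing on the (at most $c$-dimensional) fibre supports for the easy direction, and for the converse a single-fibre nonvanishing of $H^{e}$ in a sufficiently negative twist, lifted through the infinitesimal neighbourhoods via the surjectivity of the transition maps (which follows from $H^{e+1}$-vanishing of the graded pieces) and fed into the theorem on formal functions. The only divergence is in how the fibre-level nonvanishing is established: you use Serre duality, the local-to-global $\Ext$ spectral sequence and the standard nonvanishing of $\sExt^{\codim\supp}(\sF_{x_0},\omega_F)$, whereas the paper restricts to the reduced support, shrinks to a smooth dense open subset and constructs a nonzero map $\sF_x|_V\to\sO_V$ by hand; both routes are valid.
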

\begin{proof} The sufficient part is clear. We prove the necessary part, i.e., assume that $R^i\pi_*\sF(k)=0\mbox{ for all }i>0, k\in \nZ$.

The plan is to use the formal function theorem and prove by contradiction. Suppose that there is a point $x\in X$ such that $P_x=\pi^{-1}(x)$ is the fiber and $d=\dim \Supp\sF_x>c$. We claim that for  $k\gg 0$,
\begin{equation}\label{eq:11}
H^d(P_x,\sF_x(-k))\neq 0.
\end{equation}
For this, let $V=\Supp\sF_x\subset P_x$ and we give a reduced scheme structure to $V$. Then it is enough to show that for $k\gg 0$,
$$H^d(V,\sF_x|_V(-k))\neq 0.$$
Because from the exact sequence $0\rightarrow \ker\rightarrow \sF_x\rightarrow \sF_x|_V\rightarrow 0$, we see that the sections of $H^d(V,\sF_x|_V(-k))$ will be lifted to $H^d(P_x,\sF_x(-k))$ since $\dim \Supp \ker\leq d$. Let $\omega_V$ be the dualizing sheaf of $V$. By duality, one has
$$H^d(V,\sF_x|_V(-k))=\Hom(\sF_x|_V,\omega_V(k))=H^0(V,\sHom(\sF_x|_V,\omega_V)(k)).$$
Thus it is enough to show that the sheaf $\sHom(\sF_x|_V,\omega_V)\neq 0$. This is then a local question. We may shrink $V$ if necessary to assume that $V$ is nonsingular and $\omega_V=\sO_V$. Thus locally we have a surjective morphism $\oplus \sO_V\rightarrow (\sF_x|_V)^{\vee}\rightarrow 0$ which gives an injection $(\sF_x|_V)^{\vee}\hookrightarrow \oplus\sO_V$. Also note that $\sF_x|_V$ is not torsion sheaf on $V$. By the generic flatness, the natural morphism $\sF_x|_V\rightarrow (\sF_x|_V)^{\vee}$ is nonzero. Composing it with the injection above, we have a nonzero morphism $\sF_x|_V\rightarrow \oplus\sO_V$. Thus by choosing to project to $\sO_V$, we obtain a nonzero morphism $\sF_x|_V\rightarrow \sO_V$. Thus $\sHom(\sF_x|_V,\sO_V)\neq 0$ and thus (\ref{eq:11}) is true.

Now let $nP_x$ is the scheme defined by $\sI^n_{P_x}$, which is the $n$-th thickening of $P_x$, and let $\sF_{nx}$ be the restriction of $\sF$ to $nP_x$. Now from the sequence
$$0\rightarrow \ker\rightarrow \sF_{(n+1)x}\rightarrow \sF_{nx}\rightarrow 0$$
and the fact that $\dim\supp \sF_x=d$, we see that any nonzero section of $H^d(P_x,\sF_x(-k))$ in (\ref{eq:11}) will be lifted to $H^d(P_x,\sF_{(n+1)x}(-k))$. Thus by the formal function theorem, we have that $\hat{R}^d\pi_*\sF(-k)\neq 0$ which is contradict to the assumption.
\end{proof}

Taking $c=0$ in the proposition, we get the following corollary.

\begin{corollary}\label{pro:10} Keep notation as in Proposition above. $R^i\pi_*\sF(k)=0\mbox{ for all }i>0, k\in \nZ$ if and only if for any $x\in X$, the restriction $\sF_x$ of $\sF$ on the fiber over $x$ satisfies $\dim \supp\sF_x\leq 0$.
\end{corollary}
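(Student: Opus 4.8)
The plan is immediate: the Corollary is precisely the special case $c=0$ of the Proposition just proved. Substituting $c=0$, the hypothesis $R^i\pi_*\sF(k)=0$ for all $i>c$ becomes $R^i\pi_*\sF(k)=0$ for all $i>0$, and the conclusion $\dim\supp\sF_x\leq c$ becomes $\dim\supp\sF_x\leq 0$. So I would simply invoke the Proposition with this value of $c$; no genuinely new argument is needed.

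For the record, I would spell out how the two directions read after the substitution. For sufficiency, if $\dim\supp\sF_x\leq 0$ for every $x\in X$, then on each fiber $P_x\cong\nP^{\rank\sE-1}$ the restriction $\sF_x$ has zero-dimensional support, whence $H^i(P_x,\sF_x(m))=0$ for all $i>0$ and all $m\in\nZ$; the vanishing of every higher direct image $R^i\pi_*\sF(k)$ then follows by cohomology and base change, exactly as in the Proposition.

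For necessity I would argue by contraposition, which is verbatim the computation already carried out in the proof of the Proposition: were some fiber $P_x$ to contain a component of $\supp\sF_x$ of dimension $d>0$, the duality identification of $H^d(P_x,\sF_x(-k))$ together with the formal function theorem produces a nonzero $\hat{R}^d\pi_*\sF(-k)$ for $k\gg 0$, contradicting the assumed positive-degree vanishing. Since the Proposition has already been established in full generality, the substitution $c=0$ is the entire content of the Corollary, and there is no separate obstacle to overcome.
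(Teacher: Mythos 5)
Your proposal is correct and is exactly what the paper does: the Corollary is obtained by setting $c=0$ in the preceding Proposition, with no further argument required. The additional recapitulation of the two directions is harmless elaboration of the Proposition's proof rather than a new route.
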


Consider a coherent sheaf $\sF$ on a biprojective space $Y=\nP^a\times \nP^b$. We denote by $p_1$ and $p_2$ the projections of $Y$ to its components. Then $\sF$ has biregularity and also has relative regularity with respect to $p_1$ and $p_2$. In the following theorem, we relate these two types of regularity together.

\begin{proposition}\label{pro:13} Let $\sF$ be a coherent sheaf on a biprojective space $\nP^a\times \nP^b$ with $p_1$ and $p_2$ the projection morphisms to its components. Let $r$ be an integer. Then $\sF$ is relative $r$-regular with respect to $p_2$ if and only if $\sF$ is $(r,r')$-biregular for some integer $r'$.
\end{proposition}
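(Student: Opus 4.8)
The plan is to regard $p_2\colon \nP^a\times\nP^b\to\nP^b$ as the projective bundle $\nP(\sO_{\nP^b}^{\oplus(a+1)})\to\nP^b$, for which the relative hyperplane sheaf is $\sO_{\nP(\sE)}(1)=\sO(1,0)=p_1^*\sO_{\nP^a}(1)$. Under this identification ``$\sF$ is relative $r$-regular with respect to $p_2$'' means exactly $R^q(p_2)_*\sF(r-q,0)=0$ for all $q>0$, and the whole argument is driven by the Leray spectral sequence
$$E_2^{p,q}=H^p\bigl(\nP^b,\, R^q(p_2)_*\sF(m_1,0)\otimes\sO_{\nP^b}(m_2)\bigr)\ \Longrightarrow\ H^{p+q}\bigl(\nP^a\times\nP^b,\,\sF(m_1,m_2)\bigr),$$
in which I have already applied the projection formula $R^q(p_2)_*\sF(m_1,m_2)=R^q(p_2)_*\sF(m_1,0)\otimes\sO_{\nP^b}(m_2)$. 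The only further inputs are Serre vanishing on $\nP^b$ and the two monotonicity facts recorded in Section~2: relative $m$-regular implies relative $(m+1)$-regular, and $\mathbf m$-regular implies $(\mathbf m+\nN^2)$-regular.

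For the forward implication, suppose $\sF$ is relative $r$-regular. Iterating the first monotonicity fact $q-u$ times shows $R^q(p_2)_*\sF(r-u,0)=0$ whenever $q>0$ and $q\ge u$, that is, for all $q\ge\max(1,u)$. Fix $i>0$ and a pair $(u,v)$ with $u+v=i$, and inspect the terms $E_2^{p,q}$ with $p+q=i$ that compute $H^i(\nP^a\times\nP^b,\sF(r-u,r'-v))$. Every term with $q\ge\max(1,u)$ already vanishes by the displayed relative vanishing. For the remaining terms one has $p\ge1$: if $q=0$ then $p=i\ge1$, while if $1\le q\le u-1$ then $p=i-q\ge v+1\ge1$. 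Hence each surviving $E_2^{p,q}$ is a \emph{positive}-degree cohomology group on $\nP^b$ of one of the finitely many coherent sheaves $R^q(p_2)_*\sF(r-u,0)$ with $0\le q\le a$ and $0\le u\le a+b$, twisted by $\sO_{\nP^b}(r'-v)$. Serre vanishing now supplies one integer $r'$, uniform over this finite collection and over the finitely many shifts $0\le v\le a+b$ (here $i\le\dim(\nP^a\times\nP^b)=a+b$), that annihilates all of them. Thus $\sF$ is $(r,r')$-biregular.

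For the converse, suppose $\sF$ is $(r,r')$-biregular and fix $q>0$; the goal is $R^q(p_2)_*\sF(r-q,0)=0$. Since a coherent sheaf on $\nP^b$ vanishes once $H^0$ of all its sufficiently positive twists vanish, it is enough to prove $H^0\bigl(\nP^b,R^q(p_2)_*\sF(r-q,0)\otimes\sO_{\nP^b}(m)\bigr)=0$ for $m\gg0$. For $m\gg0$ Serre vanishing kills every row $p>0$ of the Leray sequence of $\sF(r-q,m)$, so it degenerates along the bottom row and yields $H^q(\nP^a\times\nP^b,\sF(r-q,m))=H^0\bigl(\nP^b,R^q(p_2)_*\sF(r-q,0)\otimes\sO_{\nP^b}(m)\bigr)$. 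On the other hand, applying the $\mathbf m+\nN^2$ monotonicity to $(r,r')$-biregularity with $u=q$, $v=0$ gives $H^q(\nP^a\times\nP^b,\sF(r-q,m))=0$ for every $m\ge r'$. Comparing the two identities for $m\gg0$ forces the $H^0$ to vanish, whence $R^q(p_2)_*\sF(r-q,0)=0$ for all $q>0$ and $\sF$ is relative $r$-regular.

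I expect the one genuinely delicate point to be the bookkeeping in the forward direction: Serre vanishing only controls \emph{higher} cohomology on the base, so it is essential that every Leray term not already killed by relative regularity lives in cohomological degree $p\ge1$. This is exactly the inequality $p=i-q\ge v+1$ forced by $q\le u-1$, together with the separate observation that the $q=0$ column contributes only in degree $p=i\ge1$. The accompanying finiteness — only finitely many direct-image sheaves and finitely many shifts $v$ occur, because $i$ is bounded by $a+b$ — is what allows a single $r'$ to work simultaneously.
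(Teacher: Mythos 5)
Your proof is correct and follows essentially the same route as the paper's: identify $p_2$ as a projective bundle with $\sO(1)=p_1^*\sO_{\nP^a}(1)$, run the Leray spectral sequence for $p_2$ together with the projection formula and Serre vanishing on $\nP^b$, and in the converse direction detect vanishing of $R^q(p_2)_*\sF(r-q)$ from $H^0$ of its large twists. The only cosmetic difference is that on the forward direction you sort the terms on the diagonal $p+q=i$ by which hypothesis kills them, whereas the paper simply kills all $p>0$ terms by Serre vanishing and the remaining $E_2^{0,i}$ term by relative regularity.
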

\begin{proof} We view $Y=\nP^a\times \nP^b$ as a projectivized vector bundle over $\nP^b$ with the tautological line bundle $\sO_Y(1)=p^*_1\sO_{\nP^a}(1)$. Denote by $\delta=\dim Y$. Suppose that $\sF$ is relative $r$-regular with respect to $p_2$, i.e., $R^i{p_2}_*\sF(r-i)=0$ for all $i>0$. For any integers $k_1$ and $k_2$ we have a spectral sequence
$$E^{p,q}_2=H^p(\nP^b,R^qp_{2*}\sF(k_1)\otimes \sO_{\nP^b}(k_2)))\Rightarrow H^{p+q}(Y, \sF(k_1)\otimes p^*_2\sO_{\nP^b}(k_2)).$$
By Serre's vanishing theorem, there exists a number $n_0$ such that for all $k_2>n_0$ and for all $k_1=r,r-1,\cdots, r-\delta$, $$H^p(\nP^b,R^q_{p_2*}\sF(k_1)\otimes \sO_{\nP^b}(k_2))=0,\quad\mbox{for all } p>0,q\geq 0.$$ We then deduce from the spectral sequence that for $k_2>n_0$ and $k_1=r,r-1,\cdots, r-\delta$,
\begin{equation}\label{eq:12}
H^0(\nP^b,R^i_{p_2*}\sF(k_1)\otimes \sO_{\nP^b}(k_2))=H^i(Y,\sF(k_1)\otimes p^*_2\sO_{\nP^b}(k_2)),
\end{equation}
Now for each $0<i\leq \delta$, since $\sF$ is relative $r$-regular with respect to $p_2$, we see that
$$R^ip_{2*}(\sF(r-u))=0,\quad \mbox{for }0\leq u\leq i.$$
Thus using (\ref{eq:12}) with $k_1=r-u$, we see
$$H^i(Y,\sF(r-u)\otimes p^*_2\sO_{\nP^b}(k_2))=0,\quad\mbox{for all } k_2\geq n_0.$$
Then taking $r'>n_0+\delta$, we see that
$$H^i(Y,\sF(r-u,r'-v))=0,$$
where $(u,v)\in \nN^2$ and $u+v=i$.  This shows that $\sF$ is $(r,r')$-biregular.

Now suppose that $\sF$ is $(r,r')$-biregular. Then $\sF$ is $(r,k_2)$-biregular for all $k_2\geq r'$. Using spectral sequence above, for each $0\leq i\leq \delta$ and $k_2\gg 0$, we have
$$H^0(\nP^b,R^ip_{2*}\sF(r-i)\otimes \sO_{\nP^b}(k_2))=H^i(Y,\sF(r-i)\otimes p^*_2\sO_{\nP^b}(k_2)).$$
Since by the biregularity assumption that $H^i(Y,\sF(r-i)\otimes p^*_2\sO_{\nP^b}(k_2))=H^i(Y,\sF(r-i,k_2))=0$, we immediately have $R^ip_{2*}\sF(r-i)=0$. Thus $\sF$ is relative $r$-regular with respect to $p_2$.
\end{proof}

\begin{corollary}\label{pro:14} Keep notation as in Proposition \ref{pro:13}. Denote by $b_1(\sF)=\min\{m_1|\ \textbf{m}=(m_1,m_2)\in \bireg \sF\}$. Then one has
$$b_1(\sF)=\reg_{p_2}\sF.$$
\end{corollary}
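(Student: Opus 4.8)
The plan is to deduce the equality directly from Proposition \ref{pro:13}, which already packages the essential comparison. First I would introduce the two index sets
$$S_{\mathrm{bi}}=\{\,m_1\in\nZ \mid (m_1,m_2)\in\bireg\sF \text{ for some } m_2\in\nZ\,\},\qquad S_{\mathrm{rel}}=\{\,r\in\nZ \mid \sF \text{ is relative } r\text{-regular w.r.t. } p_2\,\},$$
so that by definition $b_1(\sF)=\min S_{\mathrm{bi}}$ and $\reg_{p_2}\sF=\min S_{\mathrm{rel}}$. The entire content of the corollary is then the set equality $S_{\mathrm{bi}}=S_{\mathrm{rel}}$, after which the conclusion follows by taking minima.

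The key step is to read Proposition \ref{pro:13} precisely as this equality of membership conditions. If $r\in S_{\mathrm{rel}}$, that is, $\sF$ is relative $r$-regular with respect to $p_2$, then the proposition produces an integer $r'$ with $(r,r')\in\bireg\sF$, so $r\in S_{\mathrm{bi}}$. Conversely, if $r\in S_{\mathrm{bi}}$, there is some $m_2$ with $(r,m_2)\in\bireg\sF$; applying Proposition \ref{pro:13} with the witness $r'=m_2$ shows that $\sF$ is relative $r$-regular with respect to $p_2$, hence $r\in S_{\mathrm{rel}}$. The crucial alignment here is that the existential quantifier ``for some $m_2$'' in the definition of $b_1$ matches the existential ``for some integer $r'$'' in the statement of Proposition \ref{pro:13}, so that no compatibility between a fixed first index and a fixed second index is ever required. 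Thus the two sets coincide.

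Finally I would record that both sets are upward closed: the remark following Definition \ref{def:1} gives that biregularity is preserved under adding $\nN^2$, so $m_1\in S_{\mathrm{bi}}$ forces $m_1+1\in S_{\mathrm{bi}}$, and the analogous remark for relative regularity gives the same for $S_{\mathrm{rel}}$. Hence each set is an integer half-line, and taking minima of the two equal sets yields $b_1(\sF)=\reg_{p_2}\sF$. I do not expect a genuine obstacle; the only point demanding a word of care is the degenerate case in which the common set is unbounded below, where one reads both minima as $-\infty$, consistently with the earlier observation that $b_1$ and $\reg_{p_2}\sF$ may equal $-\infty$. By Corollary \ref{pro:10} this occurs exactly when the fibers of $\sF$ under $p_2$ have support of dimension $\leq 0$, so even this boundary case is already controlled by the machinery set up above, and no new estimate beyond Proposition \ref{pro:13} is needed.
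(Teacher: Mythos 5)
Your argument is correct and is exactly the intended deduction: the paper states Corollary \ref{pro:14} as an immediate consequence of Proposition \ref{pro:13}, and your set-theoretic reformulation (the first coordinates of biregularity pairs coincide with the relative regularities, so their minima agree) is precisely that deduction made explicit, including the $-\infty$ degenerate case handled via Corollary \ref{pro:10}. No gap.
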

Note that in the corollary, we do allow $b_1(\sF)$ and $\reg_{p_2}\sF$ to be $-\infty$. In fact, using Corollary \ref{pro:10}, we can easily summarize in the following corollary the case when these numbers are $-\infty$.

\begin{corollary}\label{pro:12} Keep notation as above. The following are equivalent
\begin{enumerate}
\item $b_1(\sF)=-\infty$;
\item $\reg_{p_2}\sF=-\infty$;
\item $\dim\Supp \sF_x\leq 0$ for any $x\in \nP^b$.
\end{enumerate}
\end{corollary}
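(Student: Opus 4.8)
The plan is to reduce everything to the two preceding corollaries. The equivalence of (1) and (2) requires essentially no work: Corollary \ref{pro:14} asserts the identity $b_1(\sF)=\reg_{p_2}\sF$, and since this is an honest equality of elements of $\nZ\cup\{-\infty\}$, one side equals $-\infty$ exactly when the other does. So I would dispose of (1)$\Leftrightarrow$(2) in a single sentence and concentrate on (2)$\Leftrightarrow$(3).

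For (2)$\Leftrightarrow$(3) the first step is to unwind what $\reg_{p_2}\sF=-\infty$ means. By definition $\reg_{p_2}\sF$ is the least integer $m$ for which $\sF$ is relative $m$-regular with respect to $p_2$, i.e. $R^i{p_2}_*(\sF\otimes\sO_Y(m-i))=0$ for all $i>0$, and relative $m$-regularity propagates to every $m'\geq m$. Hence $\reg_{p_2}\sF=-\infty$ holds precisely when $\sF$ is relative $m$-regular for every $m\in\nZ$. The key bookkeeping observation is then that, for each fixed $i>0$, the twist $m-i$ runs through all of $\nZ$ as $m$ does; consequently the family of conditions over all $m$ collapses to the single statement $R^i{p_2}_*\sF(k)=0$ for all $i>0$ and all $k\in\nZ$, where $\sF(k)=\sF\otimes p_1^*\sO_{\nP^a}(k)$ is the twist by the relative tautological bundle of $Y=\nP^a\times\nP^b$ regarded as a projective bundle over $\nP^b$.

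The final step is to apply Corollary \ref{pro:10}. I would take $X=\nP^b$, $\pi=p_2$, and $\sE$ the trivial bundle of rank $a+1$, so that $\nP(\sE)=Y$ and the relative tautological bundle is exactly $p_1^*\sO_{\nP^a}(1)$, matching the twist above. The fiber of $p_2$ over a point $x\in\nP^b$ is $\{x\}\times\nP^a$, and the restriction $\sF_x$ there is precisely the sheaf appearing in (3). Corollary \ref{pro:10} then identifies the vanishing $R^i{p_2}_*\sF(k)=0$ (all $i>0$, all $k\in\nZ$) with the condition $\dim\Supp\sF_x\leq 0$ for every $x\in\nP^b$, which is (3), completing the chain. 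There is no genuine obstacle here, as the substantive content lives in Corollaries \ref{pro:14} and \ref{pro:10}; the only point demanding care is the middle step, namely verifying that a relative regularity index of $-\infty$ is the same as higher-direct-image vanishing in every twist, together with the routine check that the tautological-bundle twisting convention of Corollary \ref{pro:10} agrees with the $p_1^*\sO_{\nP^a}(1)$ used in the biregularity setup.
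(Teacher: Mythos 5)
Your proposal is correct and follows exactly the route the paper intends: the paper states this corollary without proof, remarking only that it follows from Corollary \ref{pro:10} (with Corollary \ref{pro:14} giving the equivalence of (1) and (2)), and your unwinding of $\reg_{p_2}\sF=-\infty$ into the vanishing $R^i{p_2}_*\sF(k)=0$ for all $i>0$ and all $k\in\nZ$, together with the check that the tautological twist is $p_1^*\sO_{\nP^a}(1)$, is precisely the intended argument.
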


Going back to the picture at the beginning. Think of $\sO_W$ as a coherent sheaf on the biprojective space $Y=\nP^n\times \nP(V)$, then it has biregularity and therefore the number $b_1(\sO_W)$ as in the definition (\ref{def:1}) is defined. The sheaf $\sO_W$ also has relative regularity $\reg_{q}\sO_W$ with respect to $q$. The relation between these notions of regularity and the positivity of $\sI$ is described in the following proposition.

\begin{proposition}\label{pro:15} The following are equivalent
\begin{enumerate}
\item $b_1(\sO_W)=-\infty$;
\item $\reg_{q}\sO_W=-\infty$;
\item $s<d$;
\item $\pi:W\rightarrow \nP(V)$ is finite.
\end{enumerate}
\end{proposition}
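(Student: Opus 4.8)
The plan is to deduce the equivalences $(1)\Leftrightarrow(2)\Leftrightarrow(4)$ directly from the machinery already established, and to treat the link with the $s$-invariant separately by a curve-contraction argument. Viewing $\sO_W$ as a coherent sheaf on $Y=\nP^n\times\nP(V)$ with second projection $q$, Corollary \ref{pro:14} gives the literal equality $b_1(\sO_W)=\reg_q\sO_W$, so one side is $-\infty$ exactly when the other is, which is $(1)\Leftrightarrow(2)$. For $(1)\Leftrightarrow(4)$ I would invoke Corollary \ref{pro:12}, whose third condition reads $\dim\Supp(\sO_W)_x\leq 0$ for every $x\in\nP(V)$. Here $(\sO_W)_x$ is the restriction of $\sO_W$ to the fiber $q^{-1}(x)=\nP^n\times\{x\}$, whose support is $W\cap(\nP^n\times\{x\})=\pi^{-1}(x)$. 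Thus that condition says exactly that every fiber of $\pi$ is finite; since $\pi$ is proper (being the restriction of $q$ to the closed subscheme $W$), quasi-finiteness is equivalent to finiteness, which is $(4)$.

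It remains to prove $(3)\Leftrightarrow(4)$, which is the heart of the matter. The starting point is the identity $\sO_W(d\mu^*H-E)=\pi^*\sO_{\nP(V)}(1)$ from (\ref{eq:10}). Since $\pi$ is proper, it is finite if and only if it contracts no curve, i.e. if and only if $(d\mu^*H-E)\cdot C>0$ for every curve $C\subset W$ (recall $d\mu^*H-E$ is nef, so this number is always $\geq 0$). I would first dispose of the curves contracted by $\mu$: if $\mu^*H\cdot C=0$ then $C$ lies in a fiber $p^{-1}(x)\cap W\subset\{x\}\times\nP(V)$, on which $q$ restricts to an isomorphism onto $\nP(V)$; hence $\pi|_C$ is finite and $(d\mu^*H-E)\cdot C=\deg(\pi^*\sO_{\nP(V)}(1)|_C)>0$ unconditionally. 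So finiteness of $\pi$ is governed solely by the curves with $\mu^*H\cdot C>0$.

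For $(3)\Rightarrow(4)$, suppose $s<d$ and choose $s_0$ with $s<s_0<d$, so that $s_0\mu^*H-E$ is nef. For any curve with $\mu^*H\cdot C>0$ one then has $(d\mu^*H-E)\cdot C=(s_0\mu^*H-E)\cdot C+(d-s_0)\mu^*H\cdot C\geq (d-s_0)\,\mu^*H\cdot C>0$, so $\pi$ contracts no curve and is finite. For $(4)\Rightarrow(3)$, if $\pi$ is finite then $\pi^*\sO_{\nP(V)}(1)=\sO_W(d\mu^*H-E)$ is ample, being the pullback of an ample bundle under a finite morphism. Since the ample cone is open and $\mu^*H$ is nef, the class $(d\mu^*H-E)-\epsilon\mu^*H=(d-\epsilon)\mu^*H-E$ remains ample, in particular nef, for all small $\epsilon>0$; hence $s\leq d-\epsilon<d$, which is $(3)$.

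The main obstacle is precisely this $(3)\Leftrightarrow(4)$ step, and within it the handling of curves contracted by $\mu$: for such curves the nefness of $s_0\mu^*H-E$ yields only $-E\cdot C\geq 0$, which is not strict, so the strict positivity of $d\mu^*H-E$ cannot be read off from the $s$-invariant and must instead be extracted from the geometry of the bi-projective embedding, namely that $q$ is an isomorphism on each slice $\{x\}\times\nP(V)$. Once this geometric point is isolated, the remaining inequalities are routine.
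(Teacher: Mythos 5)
Your proof is correct and follows essentially the same route as the paper: the equivalences $(1)\Leftrightarrow(2)\Leftrightarrow(4)$ are read off from Corollary \ref{pro:12} exactly as the paper does, and $(3)\Leftrightarrow(4)$ is what the paper dismisses as ``the definition of the $s$-invariant.'' Your only addition is to write out that last step in full (the curve-contraction argument via $\sO_W(d\mu^*H-E)=\pi^*\sO_{\nP(V)}(1)$, nefness, and openness of the ample cone), which is a faithful expansion of the standard fact the paper is implicitly citing rather than a different approach.
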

\begin{proof} (1) $\Leftrightarrow$ (2) $\Leftrightarrow$ (4) is from Corollary \ref{pro:12}. (3) $\Leftrightarrow$ (4) is from the definition of $s$-invariant.
\end{proof}

Now we come to our main theorem of this section. We show that if $s=d$ then the asymptotic regularity of $\sI$ will be linear. We also give the way to compute the constant part of the asymptotic regularity; it is the relative regularity of the blowing-up $\sO_W$.
\begin{theorem}\label{thm:02} Suppose that $s=d$ and denote by $e=b_1(\sO_W)=\reg_{q}\sO_W$ (the equality is guaranteed by Corollary \ref{pro:14}). For $t$ large enough,  one has
$$\reg \sI^t=dt+e,$$
and $e\geq 0$. Thus we see that for the ideal sheaf $\sI$ with maximal positivity, it has linear asymptotic regularity.
\end{theorem}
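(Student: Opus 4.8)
The plan is to translate the Castelnuovo-Mumford regularity of the powers $\sI^t$ into cohomology on the blowing-up $W$, and then into biregularity data on the biprojective space $Y=\nP^n\times\nP(V)$, where the linearity becomes transparent. The starting observation is that $\sI^t\cdot\sO_W=\sO_W(-tE)$ and that, modulo controlling the higher direct images $R^i\mu_*\sO_W(-tE)$, the regularity of $\sI^t$ is governed by the vanishing of $H^i\bigl(\nP^n,\sI^t(m-i)\bigr)$. I would first reduce, for $t\gg 0$, the computation of $\reg\sI^t$ to the vanishing of groups of the form $H^i\bigl(W,\sO_W(m\mu^*H-tE)\bigr)$, using that for large $t$ the local cohomology contributions away from the blow-up stabilize (the difference between $\mu_*\sO_W(-tE)$ and $\sI^t$ and the higher $R^i\mu_*$ terms contribute only a bounded correction).

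Next I would rewrite these cohomology groups on $Y$. Using \eqref{eq:10}, namely $\sO_W(0,1)=\sO_W(d\mu^*H-E)$, one has $\sO_W(a,b)=\sO_W\bigl((a+db)\mu^*H-bE\bigr)$. Thus the sheaf $\sO_W(-tE)$ twisted by $\mu^*H$ is exactly an $\sO_Y(\,\cdot\,,t)$-twist of $\sO_W$ restricted to $W$: concretely $\sO_W(m\mu^*H-tE)=\sO_W(m-dt,\,t)$. Therefore the regularity statement $\reg\sI^t=dt+e$ is equivalent to asking that $\sO_W$, viewed as a coherent sheaf on $Y$, be $(e,t)$-biregular in the first index once we fix the second index at $t$ — precisely the content of $b_1(\sO_W)=e$. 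Since $s=d$, Proposition \ref{pro:15} tells us $b_1(\sO_W)=\reg_q\sO_W$ is a finite nonnegative integer $e$, so the first biregularity index is bounded independently of the second, and Proposition \ref{pro:13} together with Corollary \ref{pro:14} guarantees the translation between the two notions of regularity is exact. The lower bound $e\geq 0$ follows because $\sO_W\neq 0$ forces the relative regularity to be nonnegative (equivalently $b_1\ge 0$), so the slope-$d$ line cannot be shifted below $dt$.

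The main technical point, and the step I expect to be the genuine obstacle, is showing that the \emph{second} biregularity index $b_2$ grows at most linearly in $t$ — indeed that for the specific twist we need, the contribution from the $\nP(V)$-direction does not increase the first index. In other words, one must verify that the finitely many vanishing conditions defining $(e,t)$-biregularity in the first slot are implied, for all large $t$, by the fixed relative regularity $\reg_q\sO_W=e$, uniformly in $t$; this is where the spectral sequence argument of Proposition \ref{pro:13} (Serre vanishing feeding the second index to infinity) must be applied with care to ensure the threshold $n_0$ can be absorbed into the large value $t$ itself. Once this uniformity is established, combining it with the finiteness of $e$ from $s=d$ yields $H^i(\nP^n,\sI^t(dt+e-i))=0$ for $i>0$ and all $t\gg0$, and the failure of the same vanishing at level $dt+e-1$ (from $b_1(\sO_W)=e$ being minimal) gives the exact equality $\reg\sI^t=dt+e$ rather than merely an inequality.
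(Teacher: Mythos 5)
Your overall route is the same as the paper's: identify $\sO_W(m\mu^*H-tE)$ with $\sO_W(m-dt,t)$ on $Y=\nP^n\times\nP(V)$, use $b_1(\sO_W)=\reg_q\sO_W=e$ (finite because $s=d$, via Proposition \ref{pro:15}), and read off $\reg\sI^t=dt+e$ for $t\gg 0$. However, there are two genuine problems. The first is your justification of $e\geq 0$: the claim that ``$\sO_W\neq 0$ forces the relative regularity to be nonnegative'' is false as a general statement (when $\pi:W\to\nP(V)$ is finite one has $\reg_q\sO_W=-\infty$ with $\sO_W\neq 0$, and even for non-finite $\pi$ nothing a priori prevents a negative value without an argument about positive-dimensional fibers). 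The paper gets $e\geq 0$ by an entirely different and elementary mechanism: if $\reg\sI^t=dt-a$ with $a>0$, then $\sI^t(dt-a)$ is globally generated, so $(dt-a)\mu^*H-tE$ is nef and $s\leq d-a/t<d$, contradicting $s=d$. Hence $e_t:=\reg\sI^t-dt\geq 0$ for all $t$, and $e=\lim e_t\geq 0$. You need this (or some correct substitute) for the lower bound.

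The second issue is that you correctly identify the crucial step --- passing from the fixed relative regularity $\reg_q\sO_W=e$ to the vanishing $H^i(\nP^n,\sI^t(dt+e-i))=0$ uniformly for all large $t$ --- but you leave it as an acknowledged obstacle rather than resolving it. In the paper this is handled in two short moves: (i) $(e,l)$-biregularity propagates to $(e,l)+\nN^2$-biregularity, which already gives $H^i(Y,\sO_W(e-i,t))=0$ for all $t\gg 0$ with no further uniformity argument needed; and (ii) the identification $H^i(Y,\sO_W(e-i,t))=H^i(\nP^n,\sI^t(dt+e-i))$ for $t\gg 0$, which rests on $\mu_*\sO_W(-tE)=\sI^t$ and $R^{>0}\mu_*\sO_W(-tE)=0$ for large $t$ (a fact from the Cutkosky--Ein--Lazarsfeld setup that the paper simply recalls). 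For the reverse inequality the paper argues via $r=\liminf e_t$: the vanishings $H^i(Y,\sO_W(r-i,t_j))=0$ along a subsequence force $R^iq_*\sO_W(r-i)=0$ by Serre vanishing on $\nP(V)$, so $e=\reg_q\sO_W\leq r$; your appeal to ``failure of vanishing at level $dt+e-1$'' is the same idea but needs this spectral-sequence step spelled out to convert non-vanishing of a direct image sheaf into non-vanishing of $H^i(\nP^n,\sI^t(\cdot))$ for large $t$. As written, the proposal is a correct outline of the paper's strategy with the two load-bearing steps (the lower bound and the cohomological identification) either wrong or missing.
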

\begin{proof} We first show that if $s=d$ then for any $t\geq 0$, $\reg\sI^t\geq dt$. This is because otherwise suppose there is a number $t$ such that $\reg \sI^t=dt-a$ for some $a>0$. Then $\sI^t(dt-a)$ is generated by its global sections. Thus the line bundle $\sO_W((dt-a)\mu^* H-tE)$ is nef. That means $s\leq (dt-a)/t<d$ which is contradict to $s=d$.

Now let $\reg \sI^t=dt+e_t$ for $t\geq 0$. Note that from the above argument $e_t\geq 0$  and therefore $\liminf \{e_t\}\geq 0$. Also notice that $e\neq-\infty$ since $s=d$ and by Proposition \ref{pro:15}. Our plan is to prove the following inequalities
$$e\leq \liminf \{e_t\}\leq \limsup \{e_t\}\leq e.$$

First we show $\limsup \{e_t\}\leq e$. Since $e=b_1(\sO_W)$ there is a number $l$ such that $\textbf{m}=(e,l)$ and $\sO_W$ is $\textbf{m}$-regular. Then $\sO_W$ is $\textbf{m}+\nN^2$-regular. This implies that
$$H^i(Y,\sO_W(e-i,t))=0\quad\mbox{ for } t\gg 0.$$
Recall that for $t$ large enough, one has $H^i(Y,\sO_W(e-i,t))=H^i(\nP^n,\sI^t(dt+e-i))$. Thus we have the vanishing of $H^i(\nP^n,\sI^t(dt+e-i))=0$ for $t$ sufficiently large and therefore $\reg \sI^t\leq dt+e$. Thus $\limsup \{e_t\}\leq e$.

Next we show that $e\leq \liminf \{e_t\}$. Let $r=\liminf \{e_t\}$. Then there is a sequence $\{e_{t_j}\}^{\infty}_{j=0}$ such that $e_{t_j}=r$ for $t_j\rightarrow +\infty$. Thus by $H^i(Y,\sO_W(e_{t_j}-i,t_j))=H^i(\nP^n,\sI^{t_j}(dt_j+e_{t_j}-i))$ again, one has
$$H^i(Y,\sO_W(r-i,t_j))=0\quad\mbox{ for } j\gg 0,$$
which implies that $R^iq_*\sO_W(r-i)=0$. Thus $\sO_W$ is relative $r$-regular with respect to $q$. Since $e=\reg_q\sO_W$, we see that $e\leq r$ and therefore $e\leq \liminf \{e_t\}$.

Hence we get $e=\lim e_t$ and therefore $\reg \sI^t=dt +e$ for $t$ large enough.
\end{proof}

Immediately, we have the following corollary describing a typical type of variety having linear asymptotic regularity, which is not so obvious.

\begin{corollary} Suppose that $\sI$ defines a variety cut out by quadrics, then its asymptotic regularity is linear, i.e., for $t\gg 0$, one has
$$\reg \sI^t=2t+e,$$
for some positive integer $e$.
\end{corollary}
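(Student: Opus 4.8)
The plan is to verify the hypothesis $s=d$ of Theorem \ref{thm:02} and to identify the slope as $d=2$. First I would pin down $d$. Since $X$ is cut out by quadrics, the ideal sheaf $\sI$ is generated by the global sections of $\sI(2)$, so $\sI(2)$ is globally generated and $d\leq 2$. On the other hand, if $\sI(1)$ were globally generated then $\sI$ would be generated by linear forms and $X$ would coincide with its linear span, hence be a linear subspace; as $X$ is nonlinear (a linear space is cut out by linear forms, the case excluded in the introduction) this is impossible, so $d\geq 2$. Therefore $d=2$.

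Next I would show $s=2$. We already know $s\leq d=2$ from the remark following the definition of the $s$-invariant, so it suffices to prove $s\geq 2$, i.e.\ that $a\mu^*H-E$ fails to be nef for every $a<2$. Since $X$ is a nonlinear irreducible variety we have $\dim X\geq 1$, so I may choose two distinct points $p,q\in X$ and let $\ell$ be the line they span, with proper transform $\widetilde{\ell}\subset W$. Because $\ell$ meets $X$ in the two distinct points $p$ and $q$, the subscheme $\ell\cap X$ has length at least $2$, which translates into $E\cdot\widetilde{\ell}=\operatorname{length}(\sO_\ell/\sI\cdot\sO_\ell)\geq 2$, while $\mu^*H\cdot\widetilde{\ell}=\deg\ell=1$. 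Hence $(a\mu^*H-E)\cdot\widetilde{\ell}\leq a-2<0$ for $a<2$, so $a\mu^*H-E$ is not nef and $s\geq 2$. Combined with $s\leq 2$ this gives $s=d=2$. Equivalently, one may observe that $2\mu^*H-E$ is nef and meets $\widetilde{\ell}$ in a number $\leq 0$, hence in $0$, so $\widetilde{\ell}$ is contracted by $\pi$; then $\pi$ is not finite and $s=d$ by Proposition \ref{pro:15}.

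With $s=d=2$ in hand, Theorem \ref{thm:02} applies verbatim and yields $\reg\sI^t=2t+e$ for $t\gg 0$, with $e=b_1(\sO_W)=\reg_q\sO_W\geq 0$, which is the assertion of the corollary.

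The main obstacle is the computation $E\cdot\widetilde{\ell}\geq 2$, that is, relating the intersection with the exceptional divisor to the length of $\ell\cap X$. The clean way is to restrict the identity $\sI\cdot\sO_W=\sO_W(-E)$ to $\widetilde{\ell}\cong\ell$, so that $-E\cdot\widetilde{\ell}=\deg(\sI\cdot\sO_\ell)$ is the colength of the ideal cut out by $\sI$ on $\ell$; a secant line through two distinct points of $X$ forces this colength to be at least $2$. For consistency with the nefness of $2\mu^*H-E$ one should also note that the quadric hypothesis forces the length to be exactly $2$ for $\ell\not\subset X$: a line meeting $X$ in length $\geq 3$ would be annihilated by every quadric through $X$ and hence, as $X$ is cut out by quadrics, would lie in $X$.
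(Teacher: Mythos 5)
Your proof is correct and takes essentially the same route as the paper: both establish $s=d=2$ (the quadric hypothesis giving $s\le d\le 2$, a secant line through two distinct points giving $s\ge 2$) and then invoke Theorem \ref{thm:02}. You are more explicit about $d\ge 2$ and about the computation $E\cdot\widetilde{\ell}\ge 2$; the only point to add is that $p$ and $q$ should be chosen so that the line they span is not contained in $X$ (possible since $X$ is irreducible and nonlinear), so that the proper transform $\widetilde{\ell}$ is actually defined.
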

\begin{proof} Let $X$ be the variety defined by $\sI$. Since $X$ is cut out by quadrics, we see $s\leq 2$, where $s$ is the $s$-invariant of $\sI$. Also a generic secant line will cut $X$ by two distinct points. This shows that $s\geq 2$. Thus $s=2$ and then the result follows from Theorem  \ref{thm:02}.
\end{proof}

\begin{remark} (1) Note that the constant $e$ can be computed as the relative regularity of $\sO_W$. But it still hard to obtain in practise.

(2) From the corollary above, we see that these varieties having linear asymptotic regularity in fact are not so rare as we expected, although their Rees algebra $\oplus \sI^t$ are not finitely generated in general.

(3) In Chardin's work \cite{Chardin:Powersofideals}, he shows that if $I$ is a homogeneous ideal generated in a single degree $d$, then the limit
$$\lim_{t\rightarrow \infty} (\reg (I^t)^{sat}-dt)$$
exist but could be $-\infty$. Thus by using the $s$-invariant $s$, we give a clear picture for his result.
\end{remark}

In the light of Theorem \ref{thm:02}, it would be convenient to give a name for the constant $e$. We suggest the following definition.

\begin{definition} Suppose that an ideal sheaf $\sI$ satisfies $s(\sI)=d(\sI)$. The {\em asymptotic regularity constant} $e(\sI)$ is defined as the constant part of $\reg \sI^t=dt+e$ for $t$ large enough. We make a convention that if $s(\sI)<d(\sI)$, then $e(\sI)=-\infty$.
\end{definition}

\begin{remark}
According to Theorem \ref{thm:02}, the asymptotic regularity constant $e$ is always nonnegative if it is not $-\infty$. We hope that this number would carry some geometric or algebraic information of $\sI$. For example, we will see in Remark \ref{rmk:01}, it is determined by the surjectivity of a Guass map.
\end{remark}

We conclude this section by giving an example to compute a bound for $e$.

%\begin{example} Suppose $\sI$ with $s=d$ is a local complete intersection. If $\sI$ has a linear presentation as follows
%$$\oplus \sO_{\nP^n}(-d-1)\rightarrow V\otimes \sO_{\nP^n}(-d)\rightarrow \sI\rightarrow 0,$$
%where $V=H^0(\sI(d))$. Since $\sI$ is local complete intersection, the symmetric algebra of $\sI$ is equal to its Rees algebra. Thus in the diagram \ref{eq:9} we have
%$$W\cong \nP(\sI).$$
%This immediately give the definition equations of $W$ in $Y+\nP^n\times \nP(V)$ as
%\begin{equation}\label{eq:13}
%\oplus \sO_Y(-1,-1)\rightarrow \sI_W\rightarrow 0.
%\end{equation}
%Thus for any point $y\in \nP(V)$, let $Y_y=\nP^n$ be the fiber over $\nP(V)$ under the morphism $q$. Then restricting equations of (\ref{eq:13}) to the fiber $Y_y$,  we see
%$$\oplus \sO_{\nP^n}(-1)\rightarrow \sI_{W_y}\rightarrow 0.$$
%This means $W_y$ is a linear variety in the fiber $Y_y$. Therefore we see the relative regularity of $\sO_W$ with respect to $q$ is $0$. Thus the asymptotic regularity constant $e$ of $\sI$ is $0$. Typical such ideal sheaves are the defining ideal sheaf of varieties with minimal degrees, for instance, rational normal scrolls.
%\end{example}

\begin{example} Still we assume that $s(\sI)=d(\sI)$. If we know the degrees of the generators of $W$ in $Y=\nP^n\times \nP(V)$, then we could give a bound for the asymptotic regularity constant $e$. Not surprisingly, such bound would be every large in general. Specifically, assume that $\sI_W$ is cut out by equations of  bidegree $(d_i,d'_i)$ for $i=1,\cdots, l$. Let $D=\max_i\{d_i\}$. Then for any point $y\in \nP(V)$, in the fiber $Y_y$, $W_y$ is cut out by equations of degree no more than $D$. Assume that $(A,m)=(\sO_{\nP(V),y},m_y)$ the local ring of $y$. Then for integer $k\geq 1$, in the thickening fiber $Y_{ky}=\Proj A/m^k[x_0,\cdots, x_n]$, the thickening $W_{ky}$ is still cut out by equations of degrees no more than $D$. According to the result of Chardin, Fall and Nagel \cite[Example 3.6]{Chardin:BoundRegModules}, the regularity of $W_{ny}$ is bounded by $(n+1)(D-1)+1$ if $n\leq 2$, and by $(3D^3)^{2^{n-3}}$ if $n\geq 3$. Then an easy application of formal function theorem will give us that
\begin{displaymath}
e \leq \left\{ \begin{array}{ll}
(n+1)(D-1) & \textrm{if $n\leq 2$,}\\
(3D^3)^{2^{n-3}}-1 & \textrm{if $n\geq 3$.}
\end{array} \right.
\end{displaymath}
However, it is very difficult in general to find the defining equations of $W$ in $Y$.

\end{example}

\section{Tensor algebra of conormal bundles}
\noindent In the rest of the paper, we study the regularity bounds for powers of an ideal sheaf under its geometric conditions. We always assume in the sequel that $X$ is a locally complete intersection of equidimension $n$ in the projective space $\nP$ (we omit the dimension of the projective space $\nP$, which does not come into our results), defined by an ideal sheaf $\sI$.  Denote by $N^*$ the conormal sheaf of $X$ which is $\sI/\sI^2$ by definition. Since $X$ is a locally complete intersection the conormal sheaf $N^*$ is then a locally free sheaf on $X$, i.e., the conormal bundle of $X$. We fix a minimal free resolution of $\sI$ as follows
\begin{equation}
\cdots\rightarrow\oplus \sO_{\nP}(-d_{3,j})\rightarrow\oplus \sO_{\nP}(-d_{2,j})\rightarrow\oplus \sO_{\nP}(-d_{1,j})\rightarrow\oplus \sO_{\nP}(-d_{0,j})\rightarrow\sI\rightarrow 0.
\end{equation}
Tensoring the minimal resolution with the structure sheaf $\sO_X$, we obtain a complex
\begin{equation}\label{eq:01}
\cdots\rightarrow\oplus \sO_X(-d_{3,j})\rightarrow\oplus \sO_X(-d_{2,j})\rightarrow\oplus \sO_X(-d_{1,j})\rightarrow\oplus \sO_X(-d_{0,j})\rightarrow N^*\rightarrow 0,
\end{equation}
which has homology sheaves
\begin{equation}
\cdots,\quad \sH_3=\wedge^4N^*,\quad \sH_2=\wedge^3N^*,\quad \sH_1=\wedge^2N^*,\quad \sH_0=0,
\end{equation}
i.e., $$\sH_i=\wedge^{i+1}N^*\quad\mbox{ for }i\geq 1.$$
The complex (\ref{eq:01}) can be viewed as a non-exact resolution of the conormal bundle $N^*$. Our strategy is to deduce a vanishing theorem of the tensor algebra of $N^*$ from this complex.

For any number $p\geq 0$, we say that $\sI$ is $p$-th partial $m$-regular if
$$d_{i,j}-i\leq m,\quad\mbox{ for }\ 0\leq i\leq p.$$
Obviously if $\sI$ is $(p+1)$-th partial $m$-regular then it is $p$-th partial $m$-regular.
We also assume that the structure sheaf $\sO_X$ is $r_X$-regular. In order to avoid some trivial situation, we always assume that the regularity and the partial regularities of $\sI$ are at least $2$ (in fact this just means that $X$ is not a linear space).

The following lemma is easy to prove and can be found in \cite[Lemma 2.10]{Ein:SyzygyKoszul}.
\begin{lemma}\label{pro:31} Let
$$Q_\bullet: \cdots \rightarrow Q_2\rightarrow Q_1\stackrel{\epsilon}{\rightarrow} Q_0\rightarrow 0$$
be a complex of coherent sheaves on $X$, with $\epsilon$ surjective. Assume that
\begin{enumerate}
\item [(1)] $H^k(X,Q_1)=H^{k+1}(X,Q_2)=\cdots=H^n(X,Q_{n-k+1})=0$;
\item [(2)] $H^{k+1}(X,\sH_1(Q_\bullet))=H^{k+2}(X,\sH_2(Q_\bullet))=\cdots=H^n(X,\sH_{n-k}(Q_\bullet))=0$.
\end{enumerate}
Then $H^k(X,Q_0)=0$.
\end{lemma}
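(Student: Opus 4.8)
The plan is to run a standard cycle--boundary diagram chase, reducing the vanishing of $H^k(X,Q_0)$ to the two hypotheses by peeling the complex apart into short exact sequences and inducting up the complex, using $\dim X = n$ to terminate. First I would name the differentials $d_i\colon Q_i\to Q_{i-1}$ (so $d_1=\epsilon$) and introduce the cycle and boundary subsheaves
$$Z_i=\ker\bigl(d_i\colon Q_i\to Q_{i-1}\bigr),\qquad B_i=\operatorname{im}\bigl(d_{i+1}\colon Q_{i+1}\to Q_i\bigr),$$
so that $\sH_i(Q_\bullet)=Z_i/B_i$. These are coherent, being kernels and images of maps of coherent sheaves. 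Because $\epsilon$ is surjective we have $B_0=\operatorname{im}\epsilon=Q_0$, so the target statement is exactly $H^k(X,B_0)=0$. The whole complex is then encoded in two families of short exact sequences, valid for $i\geq 1$:
$$0\longrightarrow Z_i\longrightarrow Q_i\longrightarrow B_{i-1}\longrightarrow 0,\qquad 0\longrightarrow B_i\longrightarrow Z_i\longrightarrow \sH_i(Q_\bullet)\longrightarrow 0.$$

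The key claim I would prove is that $H^{k+j}(X,B_j)=0$ for all $0\leq j\leq n-k$, by descending induction on $j$. For the base case $j=n-k$, I would take the long exact sequence of the first short exact sequence with $i=n-k+1$, giving
$$H^{n}(X,Q_{n-k+1})\longrightarrow H^{n}(X,B_{n-k})\longrightarrow H^{n+1}(X,Z_{n-k+1}).$$
The left term vanishes by hypothesis (1) (its last entry), and the right term vanishes because $Z_{n-k+1}$ is coherent and $\dim X=n$, so $H^{n+1}=0$ by Grothendieck vanishing; hence $H^{n}(X,B_{n-k})=0$. For the inductive step with $0\leq j\leq n-k-1$, assuming $H^{k+j+1}(X,B_{j+1})=0$, I would combine the second sequence at index $i=j+1$, which shows $H^{k+j+1}(X,Z_{j+1})=0$ once we know $H^{k+j+1}(X,B_{j+1})=0$ (inductive hypothesis) and $H^{k+j+1}(X,\sH_{j+1}(Q_\bullet))=0$ (hypothesis (2)), with the first sequence at $i=j+1$:
$$H^{k+j}(X,Q_{j+1})\longrightarrow H^{k+j}(X,B_{j})\longrightarrow H^{k+j+1}(X,Z_{j+1}),$$
whose outer terms now both vanish (the left by hypothesis (1)). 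Thus $H^{k+j}(X,B_j)=0$, completing the induction; setting $j=0$ yields $H^k(X,B_0)=H^k(X,Q_0)=0$.

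The content here is entirely bookkeeping rather than any deep idea, so the one place to be careful is the index matching: I would check that the entries of hypothesis (1) are precisely the terms $H^{k+j}(X,Q_{j+1})$ for $0\leq j\leq n-k$, that the entries of hypothesis (2) are precisely the terms $H^{k+j}(X,\sH_j(Q_\bullet))$ for $1\leq j\leq n-k$, and that the base case is the unique step that calls on $\dim X=n$ instead of hypothesis (2) (indeed at $j=n-k$ the relevant homology index $n-k+1$ falls outside the range covered by hypothesis (2), which is exactly why the dimension bound is needed there to close the argument). A cleaner but more index-sensitive alternative would be to feed $Q_\bullet$ into the hypercohomology spectral sequence and read off the two hypotheses as the two $E_1$/$E_2$ pages; I would prefer the explicit chase above since it keeps the degree shifts transparent and avoids sign and reindexing pitfalls of placing a chain complex in a cohomological spectral sequence.
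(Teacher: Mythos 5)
Your argument is correct and complete: the descending induction on $j$ with the two families of short exact sequences $0\to Z_i\to Q_i\to B_{i-1}\to 0$ and $0\to B_i\to Z_i\to \sH_i(Q_\bullet)\to 0$ consumes exactly the terms listed in hypotheses (1) and (2), and Grothendieck vanishing in degree $n+1$ correctly closes the base case $j=n-k$, where hypothesis (2) indeed gives no information. The paper itself gives no proof, referring instead to \cite[Lemma 2.10]{Ein:SyzygyKoszul}; your chase is the standard argument behind that cited lemma, so there is nothing to reconcile.
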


The theorem we established in this section is the following vanishing theorem of the tensor products of $N^*$.

\begin{theorem}\label{thm:01} Assume that $\sI$ is $p$-th partial $r_p$-regular for $p\leq n-1$ and $\sO_X$ is $r_X$-regular. Then one has
\begin{align*}
&H^n(X,T^aN^*(k))=0 &&\mbox{ for }k\geq ar_p+r_X-n,\\
&H^{n-1}(X,T^aN^*(k))=0  &&\mbox{ for }k\geq ar_p+1+r_X-n,\\
&H^{n-2}(X,T^aN^*(k))=0 &&\mbox{ for }k\geq (a+1)r_p+r_X-n,\\
&H^{n-3}(X,T^aN^*(k))=0 &&\mbox{ for }k\geq (a+2)r_p+r_X-n,\\
&\cdots && \cdots,\\
&H^{n-p}(X,T^aN^*(k))=0 &&\mbox{ for }k\geq (a+p-1)r_p+r_X-n.
\end{align*}
\end{theorem}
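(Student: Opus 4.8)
The plan is to prove all $p+1$ vanishing statements at once by a double induction: an outer induction on the codimension index $i$, where the $i$-th assertion concerns $H^{n-i}$, and, for each fixed $i$, an inner induction on the tensor power $a$. The engine is Lemma \ref{pro:31} applied to the complex obtained from the non-exact resolution (\ref{eq:01}) of $N^*$ by tensoring with the \emph{locally free} sheaf $T^{a-1}N^*(k)$. Because $T^{a-1}N^*$ is locally free, this tensoring preserves homology, so the resulting complex
\[
\cdots \longrightarrow \oplus\, T^{a-1}N^*(k-d_{1,j}) \longrightarrow \oplus\, T^{a-1}N^*(k-d_{0,j}) \longrightarrow T^aN^*(k)\longrightarrow 0
\]
has $Q_0=T^aN^*(k)$ as its final (surjected-onto) term, has $Q_{m+1}=\oplus\, T^{a-1}N^*(k-d_{m,j})$, and has homology sheaves $\sH_m(Q_\bullet)=\wedge^m N^*\otimes T^{a-1}N^*(k)$ for $m\geq 2$, together with $\sH_0(Q_\bullet)=\sH_1(Q_\bullet)=0$ (the vanishing of $\sH_1$ follows from exactness of (\ref{eq:01}) at its $0$-th free term, i.e.\ $\sH_0=0$). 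The goal $H^{n-i}(X,T^aN^*(k))=H^{n-i}(X,Q_0)$ is then attacked by Lemma \ref{pro:31} with target degree $n-i$, which splits the requirement into two packages of lower-complexity cohomology vanishings.

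For the base of the inner induction I take $a=0$, where $T^0N^*=\sO_X$: the $r_X$-regularity of $\sO_X$ gives $H^{n-i}(X,\sO_X(k))=0$ once $k\geq r_X-n+i$, and since $r_p\geq 2$ this is implied by the threshold claimed for each $i$. The case $i=0$ (top cohomology) is then pure inner induction on $a$: here Lemma \ref{pro:31} has an empty homology package, and its single freeness condition $H^n(X,\oplus\, T^{a-1}N^*(k-d_{0,j}))=0$ is supplied by the inner hypothesis after using $d_{0,j}\leq r_p$.

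For the inductive step at fixed $i\geq 1$ and $a\geq 1$, Lemma \ref{pro:31} asks for: (Package 1) the vanishing of $H^{(n-i)+j}(X,\oplus\, T^{a-1}N^*(k-d_{j,l}))$ for $0\leq j\leq i$; and (Package 2) the vanishing of $H^{(n-i)+m}(X,\wedge^m N^*\otimes T^{a-1}N^*(k))$ for $2\leq m\leq i$. In Package 1 the term $j=0$ keeps the index $i$ but drops the power to $a-1$, so it is covered by the inner hypothesis, while the terms $j\geq 1$ have strictly smaller index $i-j$ and are covered by the outer hypothesis. In Package 2 I use that in characteristic zero $\wedge^m N^*$ is a direct summand of $T^m N^*$, so $\wedge^m N^*\otimes T^{a-1}N^*$ is a summand of $T^{a+m-1}N^*$, and the required vanishing follows from the outer hypothesis at the strictly smaller index $i-m$ (applied to the higher power $a+m-1$). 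Feeding the partial-regularity bounds $d_{j,l}-j\leq r_p$ into these reductions turns each hypothesis into a numerical inequality between the threshold it needs and the threshold claimed for $H^{n-i}(T^aN^*)$.

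The main obstacle is exactly this bookkeeping, and the tight cases are real. Writing the claimed threshold for $H^{n-i}$ as $B(a,i)$, one checks that Package 2 and the $j=0$ term of Package 1 only require $r_p\geq 1$, whereas the terms of Package 1 with $i-j\in\{0,1\}$ reduce, after inserting $d_{j,l}\leq r_p+j$, to the single binding inequality $(i-1)r_p\geq i$. This fails for $r_p=1$ but holds for every $i\geq 2$ precisely because of the standing assumption $r_p\geq 2$ (the tightest case being $i=2$); the cases $i\leq 1$ come out as equalities. This is the one place where the hypothesis that the partial regularities of $\sI$ exceed $1$ is used essentially. Once all the finitely many thresholds are verified, Lemma \ref{pro:31} delivers $H^{n-i}(X,T^aN^*(k))=0$ throughout the asserted range, closing the double induction.
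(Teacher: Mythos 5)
Your proposal is correct and follows essentially the same route as the paper: Lemma \ref{pro:31} applied to the complex (\ref{eq:01}) tensored with the locally free sheaf $T^{a-1}N^*$, a double induction on the cohomological index and the tensor power, and the characteristic-zero splitting of $\wedge^m N^*\otimes T^{a-1}N^*$ off of $T^{a+m-1}N^*$ to handle the homology sheaves. The paper merely writes out the cases $H^n$, $H^{n-1}$, $H^{n-2}$ and then says ``inductively''; your version makes the induction scheme and the numerical bookkeeping (including the binding inequality $(i-1)r_p\geq i$, which is where $r_p\geq 2$ enters) explicit, and also cleans up the indexing of the homology sheaves so that $\sH_1(Q_\bullet)=0$.
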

\begin{proof} We first establish the vanishing of $H^n$ groups. From the complex (\ref{eq:01}) and by Lemma \ref{pro:31}, we see that $$H^n(X,N^*(k))=0,\quad \mbox{for } k\geq r_p+r_X-n.$$ Tensoring $N^*$ to the the complex (\ref{eq:01}), we obtain a complex
$$\cdots\rightarrow \oplus N^*(-d_{0,j})\rightarrow T^2N^*\rightarrow 0.$$ Thus immediately by Lemma \ref{pro:31}, we have
$$H^n(X,T^2N^*(k))=0,\quad \mbox{for } k\geq 2r_p+r_X-n.$$ Repeating such procedure we then get $H^n(X,T^aN^*(k))=0$ for $k\geq ar_p+r_X-n$.

Now we establish the vanishing of $H^{n-1}$ groups. In the complex (\ref{eq:01}) notice that $$H^{n-1}(X,\sO_X(k-d_{0,j}))=H^n(X,\sO_X(k-d_{1,j}))=0,\quad\mbox{for } k\geq r_p+1+r_X-n.$$
Then by Lemma \ref{pro:31}, we obtain $H^{n-1}(X,N^*(k))=0$, for $k\geq r_p+1+r_X-n$.
Tensoring $N^*$ to the complex (\ref{eq:01}), we have a complex
$$\cdots\rightarrow \oplus N^*(-d_{1,j})\rightarrow \oplus N^*(-d_{0,j})\rightarrow T^2N^*\rightarrow 0,$$
which has homology sheaf $\sH_0=0$. Notice that we have established
$$H^{n-1}(X,N^*(k-d_{0,j}))=H^n(X,N^*(k-d_{1,j})),\quad \mbox{for }k\geq 2r_p+1+r_X-n.$$
Then by Lemma \ref{pro:31}, we obtain the vanishing of $H^{n-1}(X,T^2N^*(k))$. Repeatedly we then obtain  $H^{n-1}(X,T^aN^*(k))=0$ for $k\geq ar_p+1+r_X-n$.

Recall that for any nonnegative number $a$ and $b$, $\wedge^aN^*\otimes T^bN^*$ is a direct summand of $T^{a+b}N^*$ since $N^*$ is locally free and we work over $\nC$. Thus the vanishing of the cohomology groups of $T^{a+b}N^*$ will automatically give the vanishing of the cohomology groups of $\wedge^aN^*\otimes T^bN^*$. We will use this fact repeatedly.

In order to establish the vanishing of $H^{n-2}(X,T^aN^*(k))$ for $a\geq 1$, we tensor $T^{a-1}N^*$ to the complex (\ref{eq:01}) to get a complex
$$\cdots\rightarrow \oplus T^{a-1}N^*(-d_{2,j})\rightarrow \oplus T^{a-1}N^*(-d_{1,j})\rightarrow \oplus T^{a-1}N^*(-d_{0,j})\rightarrow T^aN^*\rightarrow 0,$$
which has homology sheaves $\sH_0=0$ and
$$\sH_i=\wedge^{i+1}N^*\otimes T^{a-1}N^*,\quad \mbox{for }i\geq 1.$$
Since inductively, we have established the vanishing of
$$H^{n-2}(X,T^{a-1}N(k-d_{0,j})), H^{n-1}(X,T^{a-1}N^*(k-d_{1,j})), H^n(X,T^{a-1}N^*(k-d_{2,j}))$$
and the vanishing of $H^n(X,\wedge^2N^*\otimes T^{a-1}N^*(k))$, which from the vanishing of $H^n(X,T^{a+1}N^*(k))$. Thus by Lemma \ref{pro:31} again, we have $H^{n-2}(X,T^aN^*(k))=0$ for $k\geq (a+1)r_p+r_X-n$.

Inductively, we can finally prove the theorem.
\end{proof}

In order to give regularity bounds for powers of $\sI$, we inductively consider the exact sequence for any $a\geq 1$,
$$0\longrightarrow \sI^{a+1}\longrightarrow \sI^a\longrightarrow S^aN^*\longrightarrow 0.$$ For any $k\in \nZ$, we denote the morphism
\begin{equation}\label{eq:02}
\phi_a:H^0(\sI^a(k))\longrightarrow H^0(S^aN^*(k)).
\end{equation}
The morphism $\phi_a$ actually depends on the twist $k$ but we omit it by abuse of notation. The crucial point is to prove the surjectivity of $\phi_a$ for specific twisting $k$. For this we will put $\phi_a$ in a commutative diagram and to analyze each morphisms in it.

We tensor $\sO_X$ to the morphism $\oplus \sO_{\nP}(-d_{0,j})\rightarrow \sI\rightarrow 0$ of the first piece in the complex (\ref{eq:01}) to get the following diagram
\begin{equation}\label{eq:03}
\begin{CD}
\oplus \sO_{\nP}(-d_{0,j}) @>>> \sI@>>> 0\\
@VVV @VVV\\
\oplus\sO_X(-d_{0,j})@>>> N^*@>>> 0
\end{CD}
\end{equation}
Now for any $k,s\in \nZ$, we have the following morphisms on global sections
$$\begin{CD}
\oplus H^0(\sO_{\nP}(k-d_{0,j}) @>>> H^0(\sI(k))\\
@VVV @VVV\\
\oplus H^0(\sO_X(k-d_{0,j}))@>>> H^0(N^*(k))
\end{CD},\mbox{and}
\quad\quad\quad
\begin{CD}
H^0(\sI^a(s))\\
@VV\phi_aV \\
H^0(S^aN^*(s))
\end{CD}
$$
We tensor the left-hand-side diagram to the right-hand-side one above to deduce the following diagram and mark morphisms in it,
$$
\xymatrix{
 \oplus H^0(\sO_{\nP}(k-d_{0,j})\otimes H^0(\sI^a(s)) \ar[d]^{1\otimes\phi_a} \ar[r]& H^0(\sI(k))\otimes H^0(\sI^a(s)) \ar[dd]\ar[r] & H^0(\sI^{a+1}(k+s)) \ar[dd]^{\phi_{a+1}} \\
 \oplus H^0(\sO_{\nP}(k-d_{0,j}))\otimes H^0(S^aN^*(s)) \ar[d]^{c_a}&\\
 \oplus H^0(S^aN^*(k-d_{0,j}+s))\ar[r]^{w_a} & H^0(N^*\otimes S^aN^*(k+s)) \ar[r]^{u_{a+1}} & H^0(S^{a+1}N^*(k+s))}
$$
We simply write it as follows
$$
\xymatrix{
 \oplus H^0(\sO_{\nP}(k-d_{0,j})\otimes H^0(\sI^a(s)) \ar[d]^{1\otimes\phi_a} \ar[rr]&& H^0(\sI^{a+1}(k+s)) \ar[dd]^{\phi_{a+1}} \\
 \oplus H^0(\sO_{\nP}(k-d_{0,j}))\otimes H^0(S^aN^*(s)) \ar[d]^{c_a}&\\
 \oplus H^0(S^aN^*(k-d_{0,j}+s))\ar[r]^{w_a} & H^0(N^*\otimes S^aN^*(k+s)) \ar[r]^{u_{a+1}} & H^0(S^{a+1}N^*(k+s))}
$$
Thus inductively, in order to build the surjectivity of $\phi_{a+1}$ it is enough to build the surjectivity of $\phi_a$, $c_a$, $w_a$ and $u_{a+1}$.

Since the formulas in Theorem \ref{thm:01} depend on the dimension of $X$ we consider lower and higher dimension cases for $X$ in the following two sections.

\section{Higher dimensional varieties}
\noindent Keep notation as in Section 3 and assume that $n=\dim X\geq 3$. Assume also that $\sI$ is $n$-th partial $r_n$-regular (recall that we always require that $r_n\geq 2$), the structure sheaf $\sO_X$ is $r_X$-regular and $X$ is $n_0$-normal, i.e., $H^1(\nP,\sI(k))=0$ for $k\geq n_0$. We start by giving the regularity bounds for $T^aN^*$ which follows easily from Theorem \ref{thm:01}.

\begin{proposition}\label{pro:01}For $a\geq 1$ one has
    $$T^aN^* \mbox{ is } (a+n-2)r_n+r_X-n+1\mbox{ regular}.$$
\end{proposition}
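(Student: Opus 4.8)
The plan is to read off Castelnuovo--Mumford regularity directly from the cohomological vanishings of Theorem~\ref{thm:01}. Viewing $T^aN^*$ as a sheaf on $\nP$ supported on $X$, its higher cohomology agrees with that computed on $X$, and $H^i(X,T^aN^*(k))=0$ automatically for $i>n=\dim X$. Hence, writing $m=(a+n-2)r_n+r_X-n+1$, it suffices to verify
$$H^i(X,T^aN^*(m-i))=0\quad\text{for }1\le i\le n.$$
Since $\sI$ is $n$-th partial $r_n$-regular it is in particular $(n-1)$-th partial $r_n$-regular, so I may apply Theorem~\ref{thm:01} with $p=n-1$ and $r_p=r_n$; this supplies vanishing thresholds for every group from $H^n$ down to $H^1$, which is exactly the range $1\le i\le n$ that I need.

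The main step is then a direct comparison, one cohomological degree at a time. Setting $i=n-j$ for $0\le j\le n-1$, the relevant twist is $m-i=(a+n-2)r_n+r_X-2n+1+j$, and I will check that it meets or exceeds the threshold furnished by Theorem~\ref{thm:01} in each case. For $j\ge 2$ the threshold is $(a+j-1)r_n+r_X-n$, and the excess of the twist over this threshold simplifies to $(n-1-j)(r_n-1)$, which is nonnegative because $j\le n-1$ and $r_n\ge 2$. For the two top degrees $j=0$ and $j=1$ the thresholds are $ar_n+r_X-n$ and $ar_n+1+r_X-n$ respectively, and in both cases the excess reduces to $(n-2)r_n-(n-1)\ge n-3\ge 0$, using $r_n\ge 2$ and $n\ge 3$. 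Thus every required group vanishes and $T^aN^*$ is $m$-regular.

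The binding constraint comes from $H^1$, that is $j=n-1$, where the excess $(n-1-j)(r_n-1)$ is exactly zero; this is what forces the stated value of $m$ and shows it to be the sharpest bound obtainable from Theorem~\ref{thm:01}. I expect the only genuine subtlety to be bookkeeping the off-by-one shifts between the two indexing conventions---the $m-i$ appearing in the definition of regularity against the thresholds as written in Theorem~\ref{thm:01}---together with confirming that the top cohomologies $H^n$ and $H^{n-1}$, whose thresholds break the uniform pattern $(a+j-1)r_n$, do not impose a stronger requirement than $H^1$. Once the arithmetic of the excess $(n-1-j)(r_n-1)$ is arranged correctly, the remaining verification is routine.
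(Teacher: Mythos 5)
Your proof is correct and follows exactly the route the paper intends: the paper's own proof simply declares the proposition an immediate consequence of Theorem~\ref{thm:01}, and your verification (applying the theorem with $p=n-1$, $r_p=r_n$, and checking that the twist $m-i$ clears each threshold, with the binding case at $H^1$ and the top two degrees handled separately using $n\ge 3$ and $r_n\ge 2$) is precisely the arithmetic being suppressed there.
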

\begin{proof} It is an immediate consequence of Theorem \ref{thm:01}.
\end{proof}

\begin{lemma}\label{pro:03} For any $a\geq 1$ the morphism
$$u_a:H^0(N^*\otimes S^{a-1}N^*(k))\longrightarrow H^0(S^aN^*(k))$$
is surjective if $k\geq (a+n-2)r_n+r_X-n$.
\end{lemma}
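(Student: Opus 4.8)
The morphism $u_a$ is induced on global sections by the natural multiplication map of sheaves $m_a\colon N^*\otimes S^{a-1}N^*\to S^aN^*$, which is surjective since $N^*$ is locally free. The plan is to control the first cohomology of $\ker m_a$ and then read off the surjectivity of $u_a$ from the long exact sequence. Writing
$$0\longrightarrow K\longrightarrow N^*\otimes S^{a-1}N^*\stackrel{m_a}{\longrightarrow} S^aN^*\longrightarrow 0$$
and twisting by $\sO_X(k)$, the map $u_a$ is surjective as soon as $H^1(X,K(k))=0$.

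The key observation is that $K$ is a direct summand of $T^aN^*$. Indeed, since we work over $\nC$ and $N^*$ is locally free, $S^{a-1}N^*$ is a direct summand of $T^{a-1}N^*$, so $N^*\otimes S^{a-1}N^*$ is a direct summand of $T^aN^*$; moreover $m_a$ splits and $K$ is a direct summand of $N^*\otimes S^{a-1}N^*$ (concretely $K$ is the Schur summand $S_{(a-1,1)}N^*$ predicted by Pieri's rule). This is exactly the kind of splitting already used in the proof of Theorem \ref{thm:01}. Because cohomology commutes with direct sums, a direct summand of an $m$-regular sheaf is again $m$-regular, so Proposition \ref{pro:01} bounds the regularity of $K$: it is $(a+n-2)r_n+r_X-n+1$ regular.

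Unwinding the definition of regularity, this gives $H^1(X,K(k))=0$ for all $k\geq (a+n-2)r_n+r_X-n$, and feeding this vanishing back into the long exact sequence above produces the surjectivity of $u_a$ in precisely the stated range. I do not expect a genuine obstacle: the only points demanding care are the bookkeeping that turns ``$K$ is $M$-regular'' into ``$H^1(K(k))=0$ for $k\geq M-1$'', and the verification (using characteristic zero) that $K$ is an honest direct summand of $T^aN^*$ rather than merely a subsheaf, so that Proposition \ref{pro:01} transfers. I would also note in passing that the same splitting shows $m_a$ is split surjective as a map of sheaves, whence $u_a$ is in fact surjective for every $k$; but since the stated range is all that the induction of Section 3 requires, I would keep the argument inside the regularity framework already set up.
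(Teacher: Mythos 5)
Your argument is correct, and it reaches the stated bound by a slightly different route than the paper. The paper works with the full Koszul-type exact sequence $\cdots\to S^{a-2}N^*\otimes\wedge^2N^*\to S^{a-1}N^*\otimes N^*\stackrel{\partial}{\to}S^aN^*\to 0$, observes that every term is a direct summand of $T^aN^*$ and hence $(a+n-2)r_n+r_X-n+1$-regular by Proposition \ref{pro:01}, and then chases the complex to obtain $H^1(\sK(k))=0$ for $\sK=\ker\partial$ in the required range. You short-circuit the chase by identifying $\ker m_a$ itself, via Pieri's rule in characteristic zero, as the Schur summand $S_{(a-1,1)}N^*$ of $N^*\otimes S^{a-1}N^*$, hence as a direct summand of $T^aN^*$; Proposition \ref{pro:01} then gives the $H^1$ vanishing directly, with no need for the higher Koszul terms or Lemma \ref{pro:31}. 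Both arguments rest on exactly the same input (regularity of $T^aN^*$ and the fact that direct summands inherit cohomology vanishing, a device the paper already uses in the proof of Theorem \ref{thm:01}), so yours is a legitimate simplification rather than a new idea. Your parenthetical observation is also correct and worth emphasizing: since $m_a$ admits the polarization section $v_1\cdots v_a\mapsto\frac1a\sum_i v_i\otimes v_1\cdots\widehat{v_i}\cdots v_a$ in characteristic zero, $u_a$ is in fact surjective for \emph{every} twist $k$, which makes the lemma as stated strictly weaker than what is true; the restricted range is only an artifact of the regularity bookkeeping. The one point to state explicitly if you write this up is the standard propagation step (an $m$-regular sheaf has $H^1$ vanishing in all twists $\geq m-1$), which you already flag.
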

\begin{proof} In the exact sequence
$$\cdots\longrightarrow S^{a-2}N^*\otimes \wedge^2N^*\longrightarrow S^{a-1}N^*\otimes N^*\stackrel{\partial}{\longrightarrow} S^aN^*\longrightarrow 0.$$
each term is $(a+n-2)r_n+r_X-n+1$ regular by Proposition \ref{pro:01}. Let $\sK=\ker \partial$, then by chasing from this complex, it is easy to see that $H^1(\nP,\sK(a+n-2)r_n+r_X-n)=0$. Thus the surjectivity of $u_a$ follows immediately.
\end{proof}

\begin{lemma}\label{pro:02} For $a\geq 0$ the morphism
$$w_a:\oplus H^0(S^aN^*(k-d_{0,j}))\longrightarrow H^0(N^*\otimes S^aN^*(k))$$
is surjective for $k\geq (n+a)r_n+r_X-n$.
\end{lemma}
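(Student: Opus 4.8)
The plan is to realize $w_a$ as the map on global sections induced by a single surjection of sheaves, and then to reduce its surjectivity to a cohomology vanishing that the chasing Lemma \ref{pro:31} can deliver. Concretely, the rightmost map of the complex (\ref{eq:01}), namely $\bigoplus \sO_X(-d_{0,j}) \to N^*$, is surjective; tensoring it by the locally free sheaf $S^aN^*$ and twisting by $\sO_X(k)$ yields the surjection $\bigoplus S^aN^*(k-d_{0,j}) \to N^* \otimes S^aN^*(k)$ whose map on $H^0$ is exactly $w_a$. Writing $\sK = \ker(\bigoplus \sO_X(-d_{0,j}) \to N^*)$ and using that $S^aN^*$ is flat, the short exact sequence $0 \to \sK \otimes S^aN^* \to \bigoplus S^aN^*(-d_{0,j}) \to N^* \otimes S^aN^* \to 0$ shows that $\mathrm{coker}(w_a)$ injects into $H^1(\sK \otimes S^aN^*(k))$. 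Thus it suffices to prove $H^1(\sK \otimes S^aN^*(k)) = 0$ for $k \ge (n+a)r_n + r_X - n$ (for $a=0$ the sheaf $S^0N^* = \sO_X$ makes this the same statement).

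To compute this $H^1$, I tensor the entire complex (\ref{eq:01}) by $S^aN^*$. Since $S^aN^*$ is locally free, tensoring is exact and commutes with homology, so the resulting complex $\cdots \to \bigoplus S^aN^*(-d_{2,j}) \to \bigoplus S^aN^*(-d_{1,j}) \to \bigoplus S^aN^*(-d_{0,j}) \to N^* \otimes S^aN^* \to 0$ has homology sheaves $\wedge^{i+1}N^* \otimes S^aN^*$ for $i \ge 1$ and $0$ in degree $0$. Truncating at $\sK \otimes S^aN^* = \mathrm{im}(\bigoplus S^aN^*(-d_{1,j}) \to \bigoplus S^aN^*(-d_{0,j}))$ produces a complex $Q_\bullet$ with $Q_0 = \sK \otimes S^aN^*$, surjective augmentation, and homology sheaves $\sH_i(Q_\bullet) = \wedge^{i+1}N^* \otimes S^aN^*$ for $i \ge 1$. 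I then apply Lemma \ref{pro:31} to $Q_\bullet(k)$ with cohomological index $1$ to conclude $H^1(Q_0(k)) = 0$.

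It remains to verify the two hypotheses of Lemma \ref{pro:31}, and this is where the sharp per-degree bounds of Theorem \ref{thm:01} enter. For hypothesis (1) I need $H^i(S^aN^*(k-d_{i,j})) = 0$ for $1 \le i \le n$; since $S^aN^*$ is a direct summand of $T^aN^*$ and $d_{i,j} \le r_n + i$, these follow from the corresponding lines of Theorem \ref{thm:01}, the most demanding being $H^1$, which holds once $k \ge (a+n-1)r_n + r_X - n + 1$. For hypothesis (2) I need $H^{m+1}(\wedge^{m+1}N^* \otimes S^aN^*(k)) = 0$ for $1 \le m \le n-1$; writing each $\wedge^{m+1}N^* \otimes S^aN^*$ as a direct summand of $T^{a+m+1}N^*$ (the summand fact used in the proof of Theorem \ref{thm:01}) and reading off Theorem \ref{thm:01}, the binding case is the top one $H^n(\wedge^n N^* \otimes S^aN^*(k)) = 0$, which requires exactly $k \ge (n+a)r_n + r_X - n$. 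Both requirements hold under the hypothesis, so Lemma \ref{pro:31} gives the desired vanishing and hence the surjectivity of $w_a$. The main point to watch is that a single uniform Castelnuovo--Mumford regularity bound (as in Proposition \ref{pro:01}) is too weak here: one must use the degree-by-degree vanishing of Theorem \ref{thm:01}, and the homology sheaves $\wedge^{i+1}N^* \otimes S^aN^*$ do not vanish, which is precisely what forces the appeal to the chasing Lemma \ref{pro:31} rather than a plain long-exact-sequence argument.
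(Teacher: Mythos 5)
Your argument is correct and follows essentially the same route as the paper's proof: tensor the complex (\ref{eq:01}) by $S^aN^*$, truncate at the kernel of the augmentation, and apply Lemma \ref{pro:31} with the homology sheaves $\wedge^{i+1}N^*\otimes S^aN^*$, the binding constraint being $H^n(\wedge^nN^*\otimes S^aN^*(k))=0$. Your closing remark is a worthwhile refinement: the paper cites the uniform bound of Proposition \ref{pro:01} for both sets of hypotheses, whereas (as you note) the top-degree vanishing at the stated threshold $k\geq (n+a)r_n+r_X-n$ really requires the degree-by-degree estimates of Theorem \ref{thm:01}, so your citation is the more accurate one.
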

\begin{proof} Tensor $S^aN^*$ to the complex (\ref{eq:01}) to get the complex
$$
\cdots\rightarrow\oplus S^aN^*(-d_{2,j})\rightarrow\oplus S^aN^*(-d_{1,j})\rightarrow\oplus S^aN^*(-d_{0,j})\stackrel{\delta}{\rightarrow} N^*\otimes S^aN^*\rightarrow 0,
$$
The homology sheaves of this complex are  $$\sH_i=\wedge^{i+1}N^*\otimes S^aN^*\quad\quad \mbox{ for } i\geq 1$$
and $\sH_0=0$.
Let $\sK$ be the kernel sheaf of the morphism $\delta$, then we have a complex
$$\cdots\rightarrow\oplus S^aN^*(-d_{2,j})\rightarrow\oplus S^aN^*(-d_{1,j})\rightarrow \sK\rightarrow 0$$
which is exact at $\sK$. Thus it is enough to show that for $k\geq (n+a)r_n+r_X-n$ we have $H^1(\sK(k))=0$. Since from Proposition \ref{pro:01}, we see that for $k\geq (n+a)r_n+r_X-n$,
$$H^1(S^aN^*(k-d_{1,j}))=H^2(S^aN^*(k-d_{2,j}))=\cdots=H^n(S^aN^*(k-d_{n,j}))=0,$$
and
$$H^2(\wedge^2N^*\otimes S^aN^*(k))=H^3(\wedge^3N^*\otimes S^aN^*(k))=\cdots=H^n(\wedge^nN^*\otimes S^aN^*(k))=0.$$
Thus by Lemma \ref{pro:31}, our result follows.
\end{proof}

\begin{proposition}\label{pro:04} For $a\geq 1$, the morphism
    $$\phi_a:H^0(\sI^a(k))\longrightarrow H^0(S^aN^*(k))$$
    is surjective for $k\geq ar_n+\max(n_0,(n-1)r_n+r_X-n)$.
\end{proposition}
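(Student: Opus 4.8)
The plan is to prove the proposition by induction on $a$, using the commutative diagram built at the end of Section 3 to pass from $\phi_a$ to $\phi_{a+1}$. Throughout I abbreviate $M=\max(n_0,(n-1)r_n+r_X-n)$, so the claim is that $\phi_a$ is surjective for $k\geq ar_n+M$. The genuine base of the induction is the map $\phi_0\colon H^0(\sO_{\nP}(s))\to H^0(\sO_X(s))$, which is surjective for $s\geq n_0$ since its cokernel injects into $H^1(\nP,\sI(s))$, which vanishes by the $n_0$-normality of $X$.

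For the inductive step I fix the target twist $K$ and view the diagram with the two free parameters $k,s$ subject to $k+s=K$. By commutativity, the composite down-then-right path from the top-left corner to $H^0(S^{a+1}N^*(K))$ factors through $\phi_{a+1}$; hence it suffices to make that composite surjective, which reduces to the simultaneous surjectivity of the four maps $1\otimes\phi_a$, $c_a$, $w_a$ and $u_{a+1}$. (I deliberately do not use surjectivity of the top horizontal arrow, which sidesteps any question about global generation of $\sI$ in a given degree.) I would then record the degree requirement for each: $1\otimes\phi_a$ is surjective as soon as $\phi_a$ is, i.e.\ for $s\geq ar_n+M$ by the inductive hypothesis; the multiplication map $c_a$ is surjective once $s\geq\reg S^aN^*$ and $k\geq\max_j d_{0,j}$, by the standard fact that for an $m$-regular sheaf $\sF$ on projective space $H^0(\sO(e))\otimes H^0(\sF(s))\to H^0(\sF(s+e))$ is onto for $s\geq m$, $e\geq 0$, and since $S^aN^*$ is a direct summand of $T^aN^*$ it is $\big((a+n-2)r_n+r_X-n+1\big)$-regular by Proposition \ref{pro:01}; finally $w_a$ and $u_{a+1}$ are surjective for $K\geq(n+a)r_n+r_X-n$ and $K\geq(a+n-1)r_n+r_X-n$ by Lemmas \ref{pro:02} and \ref{pro:03}.

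The heart of the argument is to exhibit a single admissible splitting realizing all four conditions whenever $K\geq(a+1)r_n+M$. I would take $s=ar_n+M$ and $k=K-s\geq r_n$. Then $k\geq r_n\geq\max_j d_{0,j}$, using that $d_{0,j}\leq r_n$ is the $0$-th partial regularity; and since $r_n\geq 2$ one checks $M\geq(n-2)r_n+r_X-n+1$, so $s=ar_n+M\geq\reg S^aN^*$. This settles $1\otimes\phi_a$ and $c_a$. For $w_a$ and $u_{a+1}$ only the total twist matters, and because $M\geq(n-1)r_n+r_X-n$ the estimate $K\geq(a+1)r_n+M\geq(n+a)r_n+r_X-n$ covers both Lemmas at once. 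This closes the induction. The base case $a=1$ is the same step run with $a=0$, where $c_0$ uses $\reg\sO_X=r_X$ in place of the Proposition \ref{pro:01} bound; here the inequality $(n-1)r_n+r_X-n\geq r_X$, valid since $(n-1)r_n\geq n$ for $n\geq 3$ and $r_n\geq 2$, guarantees $s=M\geq r_X$.

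The main obstacle I anticipate is precisely this bookkeeping: the parameters $k$ and $s$ are pulled in opposite directions, since $w_a$ and $u_{a+1}$ demand that the total twist $K$ be large while $c_a$ demands that $s$ alone be large and requires $k\geq r_n$ for the generation step. The whole scheme closes only because $M$ is defined to dominate simultaneously the normality threshold $n_0$ (needed for the base case and for $c_0$) and the cohomological threshold $(n-1)r_n+r_X-n$ coming from the vanishing Lemmas, and because the standing hypothesis $r_n\geq 2$ furnishes the slack $M\geq(n-2)r_n+r_X-n+1$ needed to feed $\reg S^aN^*$.
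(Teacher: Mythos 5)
Your proposal is correct and follows essentially the same route as the paper: induction on $a$ via the commutative diagram of Section 3, with the splitting $s=s_a=ar_n+M$ and $k\geq r_n$, surjectivity of the composite $u_{a+1}\circ w_a\circ c_a\circ(1\otimes\phi_a)$ forcing surjectivity of $\phi_{a+1}$, and Lemmas \ref{pro:02} and \ref{pro:03} supplying the thresholds for $w_a$ and $u_{a+1}$. Your bookkeeping (checking $M\geq(n-2)r_n+r_X-n+1$ so that $s_a$ dominates $\reg S^aN^*$, and $d_{0,j}\leq r_n$ for the multiplication map $c_a$) makes explicit exactly the estimates the paper uses implicitly.
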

\begin{proof}We prove by induction on $a$. We start by showing the surjectivity of the morphism $\phi_1: H^0(\sI(k))\rightarrow H^0(N^*(k))$. Twisting the diagram (\ref{eq:03}) by $\sO_{\nP}(k)$ and then taking global sections to obtain the diagram
$$\begin{CD}
\oplus H^0(\sO_{\nP}(k-d_{0,j})) @>>> H^0(\sI(k))\\
@Vc_0VV @VVV\\
\oplus H^0(\sO_X(k-d_{0,j}))@>w_0>> H^0(N^*(k))
\end{CD}$$
Then observe that
\begin{enumerate}
\item The surjectivity of $c_0$ is guaranteed by the normality of $X$ when $k\geq r_n+n_0$
\item The surjectivity of $w_0$ has been built in Lemma \ref{pro:02} when $k\geq nr_n+r_X-n$.
\end{enumerate}
Write $s_1=r_n+\max(n_0,(n-1)r_n+r_X-n)$ then $\phi_1$ is surjective if $k\geq s_1$. Note that $N^*$ is also $s_1$ regular by Proposition \ref{pro:01}.

Inductively suppose that $\phi_a$ is surjective. We write $s_a=ar_n+\max(n_0,(n-1)r_n+r_X-n)$ and note that $S^aN^*$ is $s_a$ regular by Proposition \ref{pro:01}. we show the surjectivity of $\phi_{a+1}$. For this consider the diagram constructed in Section 3
$$
\xymatrix{
 \oplus H^0(\sO_{\nP}(k'-d_{0,j})\otimes H^0(\sI^a(s_a)) \ar[d]^{1\otimes\phi_a} \ar[rr]&& H^0(\sI^{a+1}(k'+s_a)) \ar[dd]^{\phi_{a+1}} \\
 \oplus H^0(\sO_{\nP}(k'-d_{0,j}))\otimes H^0(S^aN^*(s_a)) \ar[d]^{c_a}&\\
 \oplus H^0(S^aN^*(k'-d_{0,j}+s_a))\ar[r]^{w_a} & H^0(N^*\otimes S^aN^*(k'+s_a)) \ar[r]^{u_{a+1}} & H^0(S^{a+1}N^*(k'+s_a))}
$$
Observe that
\begin{enumerate}
\item $1\otimes\phi_a$ is surjective since so is $\phi_a$.
\item $c_a$ is surjective if $k'\geq r_n$ since $S^aN^*$ is $s_a$-regular.
\item $w_2$ is surjective if $k'+s_a\geq (n+2)r_n+r_X-n$ by Lemma \ref{pro:02}.
\item $u_{a+1}$ is surjective if $k'+s_a\geq (a+n-1)r_n+r_X-n$ by Lemma \ref{pro:03}.
\end{enumerate}
Thus if write $s_{a+1}=(a+1)r_n+\max(n_0,(n-1)r_n+r_X-n)$ then $\phi_{a+1}$ is surjective if $k\geq s_{a+1}$.

\end{proof}

As an application of above results, we first consider the case that $X$ is a nonsingular projective variety embedded by an adjoint line bundle
$$L_d=K_X+dA+B,$$
where $A$ is a very ample line bundle, $B$ is nef line bundle and $K_X$ is the canonical line bundle. According to the work of Ein and Lazarsfeld \cite{Ein:SyzygyKoszul}, if $d\geq \dim X+1+p$ then $X$ satisfies Property $N_p$. For $L_d$ positive enough, we have a chance to get regularity bounds for powers of $\sI$.
\begin{proposition} As setting above. Assume that $d\geq 2(\dim X+1)$. Then for any $a\geq 1$, one has
$$\sI^a \mbox{ is } 2a+2n-2 \mbox{ regular}.$$
\end{proposition}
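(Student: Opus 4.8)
The plan is to translate the positivity hypothesis into the numerical invariants $r_n$, $n_0$ and $r_X$ feeding Propositions \ref{pro:01} and \ref{pro:04}, and then to induct on $a$ through the exact sequence $0\to\sI^{a+1}\to\sI^a\to S^aN^*\to 0$. Since $X$ is nonsingular it is a local complete intersection, so $N^*$ is locally free and the whole apparatus of Sections 3 and 4 applies. By the theorem of Ein and Lazarsfeld, $d\geq 2(n+1)$ forces Property $N_{n+1}$, so the minimal resolution of $\sI$ is linear through step $n+1$, giving $d_{i,j}=i+2$ for $0\leq i\leq n+1$; hence $\sI$ is $n$-th partial $2$-regular, i.e. $r_n=2$. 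Property $N_0$ is projective normality, so $H^1(\nP,\sI(k))=0$ for $k\geq 1$ and I may take $n_0=1$. The one genuinely geometric computation is the regularity of $\sO_X$: writing $L_d^{\otimes j}=K_X+\big((j-1)L_d+(dA+B)\big)$, the bracketed divisor is nef-plus-ample, hence ample, for every $j\geq 1$, so Kodaira (equivalently Kawamata--Viehweg) vanishing yields $H^i(X,\sO_X(j))=0$ for all $i\geq 1$, $j\geq 1$. As cohomology vanishes above degree $n$, this shows $\sO_X$ is $(n+1)$-regular, and I take $r_X=n+1$.

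With these values Proposition \ref{pro:01} gives that $T^aN^*$, and therefore its direct summand $S^aN^*$, is $(a+n-2)r_n+r_X-n+1=2a+2n-2$ regular, exactly the target bound $m_a:=2a+2n-2$. Proposition \ref{pro:04} makes $\phi_a$ surjective for $k\geq ar_n+\max(n_0,(n-1)r_n+r_X-n)=2a+2n-1$, and the crucial arithmetic coincidence is that this borderline twist equals $m_{a+1}-1$.

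For the base case $a=1$, I would use $0\to\sI\to\sO_{\nP}\to\sO_X\to 0$ together with the standard estimate $\reg\sI\leq\max(\reg\sO_{\nP},\reg\sO_X+1)=n+2$; since $n+2\leq 2n=m_1$ for $n\geq 3$, the sheaf $\sI$ is $m_1$-regular. For the inductive step, assume $\sI^a$ is $m_a$-regular and twist the defining sequence by $\sO_{\nP}(m_{a+1}-i)$ with $m_{a+1}=m_a+2$. For $i\geq 2$ the neighbouring terms $H^i(\sI^a(m_a+2-i))$ and $H^{i-1}(S^aN^*(m_a+2-i))$ both vanish, by the $m_a$-regularity of $\sI^a$ and of $S^aN^*$ respectively, since in each case the twist lies at or above the relevant threshold. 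For $i=1$ the pertinent segment is $H^0(\sI^a(m_a+1))\xrightarrow{\phi_a}H^0(S^aN^*(m_a+1))\to H^1(\sI^{a+1}(m_a+1))\to H^1(\sI^a(m_a+1))$; the last group vanishes by regularity of $\sI^a$, and $\phi_a$ is surjective precisely at $k=m_a+1=2a+2n-1$, so the middle group vanishes too. Thus $\sI^{a+1}$ is $m_{a+1}$-regular, closing the induction.

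The main obstacle is not any single hard estimate but keeping the numerology tight: everything hinges on $r_X=n+1$ being the correct and attainable regularity of $\sO_X$, and on the surjectivity twist $s_a$ landing exactly on $m_{a+1}-1$ rather than one unit higher. I would therefore double-check the Kodaira computation --- in particular that $H^n(X,\sO_X)$, which obstructs $n$-regularity whenever $p_g\neq 0$, is pushed just outside the range by the twist --- and confirm that the direct-summand inclusion $S^aN^*\subset T^aN^*$ genuinely transports the regularity bound, since the vanishing theorem of Section 3 is phrased for tensor powers.
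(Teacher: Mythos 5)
Your proposal is correct and follows exactly the route the paper intends: the paper's own proof is just the one-line instruction to induct via Propositions \ref{pro:01} and \ref{pro:04} and the sequences $0\to\sI^{a+1}\to\sI^a\to S^aN^*\to 0$, and you have filled in the numerology ($r_n=2$ from Property $(N_{n+1})$, $n_0=1$ from projective normality, $r_X=n+1$ from Kodaira vanishing) correctly, with the thresholds landing exactly where needed. The only nitpick is that $(N_{n+1})$ gives $d_{i,j}=i+2$ for $0\leq i\leq n$ rather than $0\leq i\leq n+1$, which is still precisely what $n$-th partial $2$-regularity requires.
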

\begin{proof} Inductively using Proposition \ref{pro:04}, Proposition \ref{pro:01} and exact sequences
$$0\longrightarrow \sI^{a+1}\longrightarrow \sI^a\longrightarrow S^aN^*\longrightarrow 0,$$
the result follows immediately.
\end{proof}

As another application, we next consider the case that $X$ is a locally complete intersection and $\sI$ is $r$-regular. Then we are able to give the regularity bounds for $\sI^a$ in terms of $r$.

\begin{proposition} As setting above, then for any $a\geq 1$ one has
$$\sI^a \mbox{ is } ar+\max(r,(n-1)r-n) \mbox{ regular}.$$
\end{proposition}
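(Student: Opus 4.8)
The plan is to reduce everything to the section's machinery by translating the single hypothesis ``$\sI$ is $r$-regular'' into the three invariants the section runs on. First I would record this dictionary. Because $\sI$ is $r$-regular its minimal free resolution satisfies $d_{i,j}-i\le r$, so $\sI$ is $n$-th partial $r$-regular and one may take $r_n=r$. From $0\to\sI\to\sO_{\nP}\to\sO_X\to 0$, the $r$-regularity of $\sI$ together with the $0$-regularity of $\sO_{\nP}$ forces $\sO_X$ to be $(r-1)$-regular, so $r_X=r-1$; the same $r$-regularity gives $H^1(\nP,\sI(k))=0$ for $k\ge r-1$, so $X$ is $(r-1)$-normal and $n_0=r-1$.

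With these values fixed, set $C=\max(r,(n-1)r-n)$ and prove by induction on $a$ that $\sI^a$ is $(ar+C)$-regular. The base case $a=1$ is immediate, since $\sI$ is $r$-regular and $C\ge 0$. For the inductive step I would feed $m=(a+1)r+C$ into the long exact cohomology sequence of
$$0\longrightarrow \sI^{a+1}\longrightarrow \sI^a\longrightarrow S^aN^*\longrightarrow 0.$$
For $i\ge 2$ the group $H^i(\sI^{a+1}(m-i))$ sits between $H^{i-1}(S^aN^*(m-i))$ and $H^i(\sI^a(m-i))$; the latter vanishes by the induction hypothesis, and the former vanishes as soon as $S^aN^*$ is $(m-1)$-regular. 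Since $S^aN^*$ is a direct summand of $T^aN^*$, Proposition \ref{pro:01} (evaluated at $r_n=r$, $r_X=r-1$) shows $S^aN^*$ is $\big((a+n-1)r-n\big)$-regular, and the inequality $C\ge(n-1)r-n\ge(n-2)r-n+1$ makes $m-1$ clear this bound.

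The substantive case is $i=1$, where $H^1(\sI^{a+1}(m-1))$ fits into
$$H^0(\sI^a(m-1))\stackrel{\phi_a}{\longrightarrow}H^0(S^aN^*(m-1))\longrightarrow H^1(\sI^{a+1}(m-1))\longrightarrow H^1(\sI^a(m-1)).$$
Its vanishing requires $H^1(\sI^a(m-1))=0$, again from the induction hypothesis, together with surjectivity of $\phi_a$ at the twist $m-1$. This surjectivity is exactly what the hard lemmas of the section deliver: by Proposition \ref{pro:04}, $\phi_a$ is onto for $k\ge ar+\max(n_0,(n-1)r+r_X-n)$. Substituting $n_0=r-1$ and $r_X=r-1$, the inner maximum collapses to $nr-n-1$ (the normality branch $r-1$ being dominated once $n\ge 3$ and $r\ge 2$), so $\phi_a$ is surjective for $k\ge(a+n)r-n-1$; the defining inequality $C\ge(n-1)r-n$ is precisely what makes $m-1$ exceed this threshold. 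Hence $\sI^{a+1}$ is $\big((a+1)r+C\big)$-regular and the induction closes.

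I expect the main obstacle to be arithmetic bookkeeping rather than a conceptual difficulty: the real work is checking that the single constant $C=\max(r,(n-1)r-n)$ simultaneously dominates the $S^aN^*$-regularity threshold and the $\phi_a$-surjectivity threshold uniformly in $a$. The binding one is the surjectivity threshold, which traces back through Proposition \ref{pro:04} to the tensor-algebra vanishing of Theorem \ref{thm:01}; the $r$ appearing in the maximum provides slack and keeps the formula uniform for all $a\ge 1$.
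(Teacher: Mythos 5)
Your proof is correct and follows exactly the route the paper intends: the paper states this proposition without proof, but its proof of the preceding proposition (``Inductively using Proposition \ref{pro:04}, Proposition \ref{pro:01} and exact sequences\dots'') is precisely your induction, and your dictionary $r_n=r$, $r_X=r-1$, $n_0=r-1$ together with the threshold checks supplies the details the paper omits. Your arithmetic in fact shows the $r$-branch of $\max(r,(n-1)r-n)$ is never binding for $n\geq 3$, $r\geq 2$, so the stated bound follows a fortiori.
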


\section{Curves and Surfaces}

\noindent In this section, we follow the same approach as previous section to study the case of curves and surfaces. Since the argument is exactly same as previous we shall be brief.

Let us first start with the case of curves.

\begin{proposition} Keep notation as in the beginning of Section 3 and assume that $\dim X=1$.
\begin{enumerate}
\item Assume that $\sI$ is $0$-th partial $r_0$-regular and $\sO_X$ is $r_X$-regular. Then
    $$T^aN^* \mbox{ is } (ar_0+r_X)\mbox{-regular}.$$
\item Assume further that $\sI$ is $1$-st partial $r_1$-regular and $X$ is $n_0$-normal, then for $a\geq 1$, the morphism
    $$\phi_a:H^0(\sI^a(k))\longrightarrow H^0(S^aN^*(k))$$
    is surjective for $k\geq ar_1+\max(n_0,r_X)$.
\end{enumerate}
\end{proposition}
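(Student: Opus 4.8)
The plan is to follow Section 4 almost verbatim, exploiting that on a curve the only higher cohomology that can survive is $H^1$: any sheaf $\sF$ supported on $X$ has $H^i(\sF(\,\cdot\,))=0$ for $i\geq 2$, so $\sF$ is $m$-regular exactly when $H^1(\sF(m-1))=0$. For the first statement I would simply specialize Theorem \ref{thm:01} to $n=1$. The only admissible value is $p=0$, and the theorem's top line reads $H^1(X,T^aN^*(k))=0$ for $k\geq ar_0+r_X-1$. Taking $m=ar_0+r_X$ then gives that $T^aN^*$ is $(ar_0+r_X)$-regular, with no auxiliary $H^{\geq 2}$ vanishing needed.

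For the second statement I would run the same induction on $a$ through the commutative diagram built in Section 3, reducing the surjectivity of $\phi_{a+1}$ to that of the four maps $1\otimes\phi_a$, $c_a$, $w_a$ and $u_{a+1}$, exactly as in Proposition \ref{pro:04}. Since $1$-st partial $r_1$-regularity implies $0$-th partial $r_1$-regularity, I may feed $r_1$ into the first statement and conclude that every $T^aN^*$, hence every summand $\wedge^iN^*\otimes S^{a-i}N^*$, is $(ar_1+r_X)$-regular; equivalently its $H^1$ vanishes after twisting by $\geq ar_1+r_X-1$. The base case $a=1$ then comes from diagram (\ref{eq:03}) on global sections: $c_0$ is surjective once $k-d_{0,j}\geq n_0$, hence for $k\geq r_1+n_0$ as $d_{0,j}\leq r_1$, while $w_0$ is the $a=0$ instance of the curve analog of Lemma \ref{pro:02}.

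The genuine content is the curve analogs of Lemmas \ref{pro:02} and \ref{pro:03}. For $w_a$ I would tensor the complex (\ref{eq:01}) by $S^aN^*$ and reduce the surjectivity to $H^1(\sK(k))=0$ for the kernel $\sK$ of $\delta$. Here Lemma \ref{pro:31} collapses: for $n=1$ and target $H^1$ its condition (2) is vacuous and only $H^1(\oplus S^aN^*(k-d_{1,j}))=0$ remains, which holds for $k\geq (a+1)r_1+r_X$ since $d_{1,j}\leq r_1+1$. For $u_{a+1}$ I would instead chase the exact symmetric Koszul complex
$$\cdots\to \wedge^2N^*\otimes S^{a-1}N^*\to N^*\otimes S^aN^*\to S^{a+1}N^*\to 0;$$
on a curve the deeper kernels have vanishing $H^2$, so the $H^1$ of the relevant kernel is a quotient of $H^1(\wedge^2N^*\otimes S^{a-1}N^*(k))$, a summand of $T^{a+1}N^*$, yielding surjectivity for $k\geq (a+1)r_1+r_X-1$. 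Finally $c_a$ is surjective for $k'\geq r_1$ by the $s_a$-regularity of $S^aN^*$. Writing $s_a=ar_1+\max(n_0,r_X)$ and $k=k'+s_a$, the bookkeeping shows each of the four bounds is met once $k'\geq r_1$, i.e. $k\geq s_{a+1}=(a+1)r_1+\max(n_0,r_X)$, which closes the induction.

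The main obstacle is really just these two surjectivity statements, that is, the $H^1$-vanishing of the syzygy kernels; and I expect the only delicate point to be index-tracking in the Koszul complex, so that the controlling term is a summand of $T^{a}N^*$ (respectively $T^{a+1}N^*$) of exactly the right weight. Everything else is arithmetic confirming that the normality bound (from $c_0$) and the regularity bounds (from $w_a$ and $u_{a+1}$) are both absorbed by the single term $\max(n_0,r_X)$.
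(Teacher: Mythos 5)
Your proposal is correct and follows exactly the route the paper intends: the paper's own proof consists of the two sentences ``(1) is an immediate corollary of Theorem \ref{thm:01}'' and ``(2) is the same argument as Proposition \ref{pro:04},'' and what you have written is precisely that specialization to $n=1$, with the curve analogs of Lemmas \ref{pro:02} and \ref{pro:03} and the bookkeeping correctly worked out (in particular, condition (2) of Lemma \ref{pro:31} does become vacuous for $H^1$ on a curve, and the bounds $(a+1)r_1+r_X$ and $(a+1)r_1+r_X-1$ are both absorbed by $k'\geq r_1$ over $s_a$).
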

\begin{proof} (1) is an immediate corollary of Theorem \ref{thm:01}. (2) is the same argument as Proposition \ref{pro:04}.
\end{proof}

In fact for a nonsingular projective curve embedded by a large degree line bundle $L$, Vermeire \cite{Vermeire:RegPowers} has used the same idea to prove the following theorem.

\begin{proposition} Assume that $X$ is a nonsingular projective curve of genus $g$ embedded by a line bundle $L$ with $\deg L\geq 2g+3$. Then for any $a\geq 1$,
$$\sI^a \mbox{ is } (2a+1)\mbox{-regular}.$$
\end{proposition}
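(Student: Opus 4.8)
The plan is to follow the inductive scheme set up in Section 3, using the two preceding curve propositions as the technical engine; the real content is then to read off the numerical invariants $r_0,r_1,r_X,n_0$ forced by the degree hypothesis and to feed them into a short induction on $a$ built from the exact sequence $0\rightarrow \sI^{a+1}\rightarrow \sI^a\rightarrow S^aN^*\rightarrow 0$.

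First I would pin down the invariants. Since $\deg L\geq 2g+3=2g+1+2$, the theorem of Green and Lazarsfeld gives that $L$ satisfies Property $(N_2)$; in particular $\sI$ is generated by quadrics and has linear first syzygies, so in the minimal resolution $d_{0,j}=2$ and $d_{1,j}=3$. Hence $\sI$ is $0$-th partial $2$-regular and $1$-st partial $2$-regular, so one may take $r_0=r_1=2$. Because $\deg L\geq 2g+1$ the embedding is projectively normal, which gives $n_0=1$, and since $\deg L>2g-2$ we have $H^1(X,L)=0$, so that $\sO_X$ is $2$-regular and we may take $r_X=2$.

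Feeding these values into the two curve propositions produces exactly the inputs needed: $T^aN^*$ is $(ar_0+r_X)=(2a+2)$-regular, hence its direct summand $S^aN^*$ is $(2a+2)$-regular, and the morphism $\phi_a\colon H^0(\sI^a(k))\rightarrow H^0(S^aN^*(k))$ is surjective for $k\geq ar_1+\max(n_0,r_X)=2a+2$. The induction on $a$ then proceeds as in Section 4. For the base case $a=1$ one checks directly that $\sI$ is $3$-regular, using projective normality for the $H^1$ statement and $H^1(X,L)=0$ together with $\dim X=1$ and the vanishing of the relevant groups of $\sO_{\nP}$ in the sequence $0\rightarrow \sI\rightarrow \sO_{\nP}\rightarrow \sO_X\rightarrow 0$. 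For the inductive step, assume $\sI^a$ is $(2a+1)$-regular and twist the sequence $0\rightarrow \sI^{a+1}\rightarrow \sI^a\rightarrow S^aN^*\rightarrow 0$ by $\sO_{\nP}(2a+3-i)$. For $i\geq 2$ the group $H^i(\sI^{a+1}(2a+3-i))$ sits between $H^{i-1}(S^aN^*(2a+3-i))$ and $H^i(\sI^a(2a+3-i))$, both of which vanish by the $(2a+2)$-regularity of $S^aN^*$ and the $(2a+1)$-regularity of $\sI^a$; for $i=1$ the cokernel appearing is killed by the surjectivity of $\phi_a$ at $k=2a+2$, while $H^1(\sI^a(2a+2))=0$ again by regularity. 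Thus $\sI^{a+1}$ is $(2a+3)$-regular and the induction closes.

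The main obstacle is not a single hard estimate but the tightness of this bookkeeping: the argument succeeds only because both the surjectivity threshold $2a+2$ for $\phi_a$ and the regularity $2a+2$ of $S^aN^*$ are exactly one more than $\reg \sI^a=2a+1$. I would therefore take care that it is Property $(N_2)$, and not merely $(N_1)$, that is being used, since it is the linearity of the first syzygies that forces $r_1=2$ and hence makes the $\phi_a$-threshold small enough to align with the regularity of $\sI^a$; with only $\deg L\geq 2g+2$ one would have $r_1\geq 3$, the threshold would become $3a+2$, and the induction would no longer yield the sharp bound $2a+1$.
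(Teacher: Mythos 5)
Your proposal is correct and follows essentially the same route as the paper: the paper's own proof is simply to invoke the preceding curve proposition (the $n=1$ specializations of Theorem 3.3 and Proposition 4.4), and your computation of the invariants $r_0=r_1=2$, $r_X=2$, $n_0=1$ from Property $(N_2)$ and projective normality, followed by the induction on the sequence $0\rightarrow \sI^{a+1}\rightarrow \sI^a\rightarrow S^aN^*\rightarrow 0$, is exactly the bookkeeping the paper leaves implicit. Your closing remark correctly identifies why $(N_2)$ rather than $(N_1)$ is the operative hypothesis.
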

\begin{proof} Just apply the proposition above.
\end{proof}

The syzygies of the curve embedded by such line bundle $L$ has been studied by Green and Lazarsfeld. And according to their theorem such curve satisfies at leas Property $(N_2)$.

Also in his work, Vermeire has proved that $\sI^a$ is $2a$-regular if and only if the Guass map $\Phi_L$ is surjective. Plus the linearity theorem we established in Section 2, we are able to give the effective value for $\reg \sI^a$, which slightly strengthens Vermerie's result.

\begin{proposition}\label{pro:05} Assume that $X$ is a nonsingular projective curve of genus $g$ embedded by a line bundle $L$ with $\deg L\geq 2g+3$. Then for any $a\geq 1$,
\begin{enumerate}
\item $\reg\sI^a=2a+1$ if and only if the Guass map $\Phi_L$ is not surjective
\item $\reg \sI^a=2a$ if and only if the Guass map $\Phi_L$ is surjective.
\end{enumerate}
\end{proposition}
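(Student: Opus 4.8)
The plan is to combine the asymptotic-regularity machinery of Section 2 with Vermeire's characterization. Recall from the preceding discussion that $\sI^a$ is always $(2a+1)$-regular, and that Vermeire has shown $\sI^a$ is $2a$-regular if and only if the Gauss map $\Phi_L$ is surjective. Thus for each fixed $a$ we already know $\reg \sI^a \in \{2a, 2a+1\}$, and which value occurs is governed precisely by surjectivity of $\Phi_L$. The whole point is therefore to show that the two alternatives $\reg\sI^a = 2a$ and $\reg\sI^a = 2a+1$ are genuinely exhaustive and mutually exclusive, and to identify the asymptotic regularity constant $e$ with the Gauss-map condition.

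\emph{First I would pin down the $s$-invariant.} Since $\deg L \geq 2g+3$, the curve $X$ is cut out by quadrics (it satisfies Property $(N_2)$ by Green--Lazarsfeld, hence in particular is projectively normal and defined by quadrics). By the Corollary following Theorem \ref{thm:02}, any nonlinear variety cut out by quadrics has $s(\sI)=d(\sI)=2$, so Theorem \ref{thm:02} applies and gives $\reg\sI^a = 2a + e$ for $a \gg 0$, with $e = e(\sI) \geq 0$ a fixed constant. Combining this with the uniform bound $\reg\sI^a \leq 2a+1$ forces $e \in \{0,1\}$. This already shows the asymptotic value is one of the two stated, and that it is \emph{constant} in $a$ for $a$ large.

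\emph{Next I would upgrade the asymptotic statement to hold for all $a\geq 1$ and tie $e$ to the Gauss map.} The key is Vermeire's equivalence: $\sI^a$ is $2a$-regular $\iff \Phi_L$ is surjective. The crucial observation is that surjectivity of $\Phi_L$ is a single geometric condition independent of $a$, so the case $e=0$ occurs for \emph{every} $a$ simultaneously precisely when $\Phi_L$ is surjective, and otherwise $\reg\sI^a = 2a+1$ for every $a$. Concretely: if $\Phi_L$ is surjective then $\reg\sI^a \leq 2a$ for all $a$ by Vermeire, and since $s=2$ gives $\reg\sI^a \geq 2a$ (as in the first paragraph of the proof of Theorem \ref{thm:02}, where $s=d$ forces $\reg\sI^t \geq dt$), we conclude $\reg\sI^a = 2a$ exactly. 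If $\Phi_L$ is \emph{not} surjective, then $\sI^a$ fails to be $2a$-regular for each $a$, so $\reg\sI^a > 2a$; combined with the uniform upper bound $\reg\sI^a \leq 2a+1$ this yields $\reg\sI^a = 2a+1$. This establishes both biconditionals for all $a\geq 1$, not merely asymptotically.

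\emph{The main obstacle} is making the step ``$\sI^a$ fails to be $2a$-regular for each $a$'' precise and $a$-uniform, since Vermeire's equivalence as quoted must be checked to hold for all $a$ rather than a single value; the cleanest route is to read off from Vermeire that the obstruction to $2a$-regularity is exactly a cohomology group controlled by $\Phi_L$ whose (non)vanishing is independent of $a$, so that the two regimes never mix. Once this uniformity is in hand, the proof is simply a matter of assembling the inequality $\reg\sI^a \geq 2a$ (from $s=2$), the inequality $\reg\sI^a \leq 2a+1$ (already established), and Vermeire's sharp criterion, and observing that the Gauss-map condition is the single switch separating the two cases.
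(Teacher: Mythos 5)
Your proposal is correct and follows essentially the same route the paper intends: combine the uniform upper bound $\reg\sI^a\le 2a+1$, Vermeire's criterion that $\sI^a$ is $2a$-regular iff $\Phi_L$ is surjective, and the lower bound $\reg\sI^a\ge 2a$ coming from $s=d=2$ (the curve being cut out by quadrics) via the first step of the proof of Theorem \ref{thm:02}. The paper leaves the proof implicit, but the surrounding text ("Plus the linearity theorem we established in Section 2...") indicates exactly this assembly, so your argument matches the intended one.
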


\begin{remark}\label{rmk:01} Note that the proposition means that if for some $a$ $\reg \sI^a=2a$ then for any $a$, $\reg \sI^a=2a$. Thus the asymptotic behavior of the regularity of curves in this case is quite clear: the asymptotic regularity constant $e$ is either $0$ or $1$, which depends on the surjectivity of the Guass map $\Phi_L$.
\end{remark}

Next, we give regularity bounds for any powers of $\sI$ by assuming that we know the regularity of $\sI$.

\begin{proposition} Assume that $X$ is a local complete intersection of dimension one in $\nP$ defined by an ideal sheaf $\sI$ which is $r$-regular, then for any $a\geq 1$, one has
$$\sI^a \mbox{ is } ar\mbox{-regular}.$$
\end{proposition}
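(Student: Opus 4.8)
The plan is to run the same induction on $a$ used in the higher–dimensional case, now feeding in the two facts about curves proved at the start of this section. First I would convert the single hypothesis that $\sI$ is $r$-regular into the data those facts require. Since $r$-regularity means $d_{i,j}-i\le r$ for all $i,j$ in the minimal resolution, $\sI$ is $p$-th partial $r$-regular for every $p$, so one may take $r_0=r_1=r$. Moreover $H^1(\nP,\sI(k))=0$ for $k\ge r-1$, so $X$ is $(r-1)$-normal and $n_0=r-1$. Finally, feeding $r$-regularity of $\sI$ into the structure sequence $0\to\sI\to\sO_{\nP}\to\sO_X\to 0$ and using $H^1(\nP,\sO_{\nP}(k))=0$ shows $\sO_X$ is $(r-1)$-regular, so $r_X=r-1$.

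With these values, part (1) of the first proposition of this section gives that $T^aN^*$, and hence its direct summand $S^aN^*$ (we are in characteristic zero), is $(ar+r-1)$-regular; and part (2) gives that the comparison map $\phi_a\colon H^0(\sI^a(k))\to H^0(S^aN^*(k))$ is surjective for $k\ge ar+\max(n_0,r_X)=(a+1)r-1$. These are exactly the two inputs needed to close the induction at the critical twist.

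Next I would induct on $a$, the base case $a=1$ being the hypothesis. Assuming $\sI^a$ is $ar$-regular, I would twist the sequence $0\to\sI^{a+1}\to\sI^a\to S^aN^*\to 0$ by $\sO_{\nP}((a+1)r-i)$ and test $(a+1)r$-regularity of $\sI^{a+1}$ via the long exact cohomology sequence. For $i\ge 2$ both $H^i(\sI^a((a+1)r-i))$ and $H^{i-1}(S^aN^*((a+1)r-i))$ vanish, the former by the inductive hypothesis ($\sI^a$ is $ar$-regular, and $(a+1)r-i\ge ar-i$), the latter by the $(ar+r-1)$-regularity of $S^aN^*$; hence $H^i(\sI^{a+1}((a+1)r-i))=0$.

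The only delicate value is $i=1$, which is where the real content sits. At $k=(a+1)r-1$ the relevant strand reads $H^0(\sI^a(k))\stackrel{\phi_a}{\to}H^0(S^aN^*(k))\to H^1(\sI^{a+1}(k))\to H^1(\sI^a(k))$. The right-hand group vanishes by the inductive hypothesis, and the surjectivity of $\phi_a$ precisely at $k=(a+1)r-1$ kills the connecting map, so $H^1(\sI^{a+1}((a+1)r-1))=0$ and the induction closes. Thus the genuine obstacle is the surjectivity of $\phi_a$ at the exact boundary twist; everything else is bookkeeping with the regularity bounds. That surjectivity is supplied by part (2) of the earlier curve proposition, which in turn rests on the vanishing theorem of Section 3 together with the diagram chase through $c_a$, $w_a$ and $u_{a+1}$.
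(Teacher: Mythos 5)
Your proposal is correct and is exactly the argument the paper intends (the paper states this proposition without proof, immediately after the general curve proposition whose specialization you carry out): you correctly extract $r_0=r_1=r$, $n_0=r-1$, $r_X=r-1$ from the single hypothesis of $r$-regularity, and the induction via $0\to\sI^{a+1}\to\sI^a\to S^aN^*\to 0$ with the surjectivity of $\phi_a$ at the boundary twist $(a+1)r-1$ is the same mechanism used in Proposition \ref{pro:04}.
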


\begin{remark} This formula is sharp. For example assume that $X$ is a rational normal curve, then $\reg\sI=2$ and therefore $\reg \sI^a=2a$, which is sharp.
\end{remark}

Now we turn to the case of surfaces.
\begin{proposition} Keep notation as in the beginning of Section 3 and assume that $\dim X=2$.
\begin{enumerate}
\item Assume that $\sI$ is $1$-th partial $r_1$-regular and $\sO_X$ is $r_X$-regular. Then
    $$T^aN^* \mbox{ is } (ar_1+r_X)\mbox{-regular}.$$
\item Assume further that $\sI$ is $2$-nd partial $r_2$-regular and $X$ is $n_0$-normal, then for $a\geq 1$, the morphism
    $$\phi_a:H^0(\sI^a(k))\longrightarrow H^0(S^aN^*(k))$$
    is surjective for $k\geq ar_2+\max(n_0,r_2+r_X-2)$.
\end{enumerate}
\end{proposition}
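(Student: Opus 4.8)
For part (1), the plan is simply to read off the two relevant vanishing lines from Theorem \ref{thm:01}. Taking $n=\dim X=2$ and $p=1$ (the maximal allowed value $p\le n-1$), the theorem yields
$$H^2(X,T^aN^*(k))=0\ \mbox{ for } k\ge ar_1+r_X-2,\qquad H^1(X,T^aN^*(k))=0\ \mbox{ for } k\ge ar_1+r_X-1.$$
Since $T^aN^*$ is supported on the surface $X$, all cohomology $H^i(X,-)$ with $i\ge 3$ vanishes automatically, so the only conditions for $m$-regularity are $H^1(T^aN^*(m-1))=0$ and $H^2(T^aN^*(m-2))=0$. Both hold as soon as $m\ge ar_1+r_X$, which is exactly the assertion. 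This parallels Proposition \ref{pro:01}, the only difference being that the surface bound rests on $1$-st partial regularity alone.

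For part (2), I would reproduce the inductive scheme of Proposition \ref{pro:04} after establishing in the surface setting the two surjectivity lemmas it relies on. Throughout I note that $2$-nd partial $r_2$-regularity implies $1$-st partial $r_2$-regularity, so part (1) applies with $r_1=r_2$ and every direct summand of $T^bN^*$, in particular each $\wedge^iN^*\otimes S^{b-i}N^*$, is $(br_2+r_X)$-regular. The first lemma is the surface analog of Lemma \ref{pro:03}: from the exact Koszul complex $\cdots\to S^{a-2}N^*\otimes\wedge^2N^*\to S^{a-1}N^*\otimes N^*\to S^aN^*\to 0$, setting $\sK=\ker$ and using that $H^i(X,-)=0$ for $i\ge 3$, a short chase gives $H^1(\sK(k))=0$ and hence surjectivity of $u_a$ once $k\ge ar_2+r_X-1$. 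The second is the analog of Lemma \ref{pro:02}: tensoring the resolution (\ref{eq:01}) by $S^aN^*$, passing to the kernel $\sK$ of the multiplication map, and applying Lemma \ref{pro:31} with homology sheaves $\sH_i=\wedge^{i+1}N^*\otimes S^aN^*$ shows $w_a$ is surjective for $k\ge(a+2)r_2+r_X-2$; the binding condition is $H^2(\wedge^2N^*\otimes S^aN^*(k))=0$, and it is precisely the standing hypothesis $r_2\ge 2$ that makes this the dominant term.

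With these in hand I would induct on $a$ through the commutative diagram of Section 3, writing $s_a=ar_2+\max(n_0,r_2+r_X-2)$. For the base case $\phi_1$, combine surjectivity of the restriction $c_0$ (valid for $k\ge n_0+r_2$ by $n_0$-normality) with surjectivity of $w_0$ (valid for $k\ge 2r_2+r_X-2$), which gives exactly $k\ge s_1$. For the inductive step, set the outer twist to $k=k'+s_a$, note that $S^aN^*$ is $s_a$-regular because $\max(n_0,r_2+r_X-2)\ge r_X$, and check that $1\otimes\phi_a$, $c_a$ (for $k'\ge r_2$), $w_a$, and $u_{a+1}$ are simultaneously surjective when $k\ge s_{a+1}$; since $s_{a+1}-s_a=r_2$, the boundary value $k'=r_2$ already handles $c_a$, and the inequality $r_2\ge 2$ ensures $s_{a+1}$ dominates both the $w_a$ bound $(a+2)r_2+r_X-2$ and the $u_{a+1}$ bound $(a+1)r_2+r_X-1$. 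The main obstacle is exactly this bookkeeping: each surface analog of the intermediate lemmas is weaker by one than a naive substitution $n=2$ into the higher-dimensional formulas (because part (1) only yields regularity $ar_2+r_X$ rather than $ar_2+r_X-1$), so one must verify at every stage that the slack in the $\max(n_0,r_2+r_X-2)$ term absorbs this loss.
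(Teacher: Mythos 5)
Your proposal is correct and follows exactly the route the paper intends: the paper states this surface proposition without writing out a proof (Section 5 opens by saying the argument is the same as in Section 4), and you have correctly carried out that adaptation, reading part (1) off the $n=2$, $p=1$ lines of Theorem \ref{thm:01} and rerunning Lemmas \ref{pro:02}--\ref{pro:03} and the induction of Proposition \ref{pro:04} with the right surface bounds. The only cosmetic inaccuracy is your closing remark that \emph{each} intermediate lemma loses one compared with naive substitution $n=2$: this is true for the regularity of $T^aN^*$ and for the $u_a$ bound, but your $w_a$ bound $(a+2)r_2+r_X-2$ actually coincides with the naive substitution; this does not affect the argument.
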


As in the higher dimensional case, here we consider a nonsingular surface embedded by an adjoint line bundle. This can be viewed as a generalization of Vermier's result.

\begin{proposition} Assume that $X$ is a nonsingular projetive surface embedded by an adjoint line bundle
$L_d=K_X+dA+B$
where $A$ is a very ample line bundle and $B$ is nef line bundle and $K_X$ is the canonical line bundle. Assume that $d\geq \dim X+1+3$. Then for any $a\geq 1$, one has
$$\sI^a \mbox{ is } (2a+2) \mbox{-regular}.$$
\end{proposition}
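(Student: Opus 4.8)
The plan is to run the same inductive argument used for the higher-dimensional adjoint proposition, feeding on the preceding surface proposition together with the short exact sequences
$$0 \longrightarrow \sI^{a+1} \longrightarrow \sI^a \longrightarrow S^aN^* \longrightarrow 0.$$
First I would extract the numerical input from the hypothesis $d \geq \dim X + 1 + 3 = 6$. By the theorem of Ein and Lazarsfeld, $L_d$ satisfies Property $(N_3)$; in particular $X$ is projectively normal, so $H^1(\nP,\sI(k))=0$ for $k\geq 1$ and we may take $n_0=1$, and the first three steps of the minimal free resolution of $\sI$ are linear. In the present language this means $d_{i,j}-i=2$ for $0\leq i\leq 2$, i.e. $\sI$ is $2$-nd partial $2$-regular, so $r_1=r_2=2$.

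The one genuinely geometric input is the regularity $r_X$ of $\sO_X$, and establishing $\reg\sO_X\leq 3$ is where the adjoint hypothesis is really used. Since $\dim X=2$ it suffices to check $H^2(X,L_d)=0$ and $H^1(X,2L_d)=0$. Serre duality gives $H^2(X,L_d)=H^0(X,K_X-L_d)^\vee=H^0(X,-dA-B)^\vee=0$ because $dA+B$ is ample. For the $H^1$ vanishing I would write $2L_d=K_X+(K_X+2dA+2B)$ and apply Kodaira vanishing, noting that $K_X+2dA$ is ample for $d\geq 6$ (as $A$ is very ample on a surface and $2d\geq 12$) and that $2B$ is nef, so $K_X+2dA+2B$ is ample. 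Hence $\sO_X$ is $3$-regular, and Part (1) of the preceding proposition gives that $S^aN^*$, being a direct summand of $T^aN^*$, is $(2a+3)$-regular, while Part (2) gives that $\phi_a$ is surjective for $k\geq 2a+\max(n_0,\,r_2+r_X-2)=2a+3$.

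With these values fixed I would induct on $a$. The base case $a=1$ asserts $\reg\sI\leq 4$, which follows from $0\to\sI\to\sO_{\nP}\to\sO_X\to 0$ together with $\reg\sO_X\leq 3$. For the inductive step, assuming $\sI^a$ is $(2a+2)$-regular, I would read off $H^i(\nP,\sI^{a+1}(2a+4-i))=0$ for each $i\geq 1$ from the long exact cohomology sequence of the displayed short exact sequence: the term $H^i(\nP,\sI^a(2a+4-i))$ vanishes by the inductive hypothesis, and the incoming contribution is killed either because $H^{i-1}(S^aN^*(2a+4-i))=0$ by the $(2a+3)$-regularity of $S^aN^*$ (for $i\geq 2$), or, for $i=1$, because the relevant connecting map is controlled by the surjectivity of $\phi_a\colon H^0(\sI^a(2a+3))\to H^0(S^aN^*(2a+3))$ supplied by Part (2). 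For $i\geq 4$ the $S^aN^*$-term vanishes automatically since $X$ is a surface. This yields that $\sI^{a+1}$ is $(2a+4)$-regular, closing the induction.

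The hardest part, and the only real subtlety, is that all of the numerical inequalities close \emph{exactly}: the surjectivity of $\phi_a$ holds precisely at $k=2a+3$, and the two higher cohomology vanishings of $S^aN^*$ hold precisely at the boundary. There is no slack to absorb a weaker bound on $r_X$, so I would take care to verify that $\reg\sO_X\leq 3$ is genuinely sharp enough; pinning down this estimate via Kodaira vanishing and Serre duality is the crux of the argument, the remaining bookkeeping being identical to the higher-dimensional case.
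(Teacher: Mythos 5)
Your proof is correct and is exactly the argument the paper intends: the paper states this proposition without proof (remarking only that "the argument is exactly the same as the previous section"), and the intended route is precisely your induction on the exact sequences $0\to\sI^{a+1}\to\sI^a\to S^aN^*\to 0$ fed by the two preceding surface propositions. Your supplementary bookkeeping — $(N_3)$ from Ein--Lazarsfeld giving $n_0=1$ and $r_1=r_2=2$, and $\reg\sO_X\leq 3$ via Serre duality and Kodaira vanishing — correctly fills in the numerical inputs the paper leaves implicit, and you are right that the inequalities close exactly at the boundary.
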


\begin{remark} It would be very interesting to find the effective value of $\reg\sI^a$. We hope that there is a similar result as in the case of curves in Proposition \ref{pro:05}.
\end{remark}

Let us conclude this section by the following proposition, which
gives a bound for the regularity of $\sI^a$ in terms of the regularity of $\sI$.

\begin{proposition} Assume that $X$ is a local complete intersection of dimension $2$ in a projective space defined by an ideal sheaf $\sI$ which is $r$-regular, then for any $a\geq 1$, one has
$$\sI^a \mbox{ is } (ar+r-2) \mbox{-regular}.$$
\end{proposition}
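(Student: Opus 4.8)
The plan is to reduce the statement for a local complete intersection surface with $\sI$ being $r$-regular to the previous proposition by identifying the partial regularities and the other invariants in terms of $r$. Since $\sI$ is $r$-regular, its minimal free resolution has generators in degrees $d_{i,j}-i \leq r$ for all $i$, so in particular $\sI$ is $2$-nd partial $r_2$-regular with $r_2 = r$. Moreover, the $r$-regularity of $\sI$ forces $X$ to be $r$-normal (indeed $n_0 \leq r$ since $H^1(\nP, \sI(k)) = 0$ for $k \geq r$), so we may take $n_0 = r$.

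Next I would control the regularity $r_X$ of the structure sheaf $\sO_X$. From the short exact sequence $0 \to \sI \to \sO_{\nP} \to \sO_X \to 0$ and the $r$-regularity of $\sI$ (together with the regularity of $\sO_{\nP}$), a standard cohomology chase yields that $\sO_X$ is $r$-regular, i.e. $r_X = r$. This is the step that requires a little care: I must verify the vanishing $H^i(\nP, \sO_X(r-i)) = 0$ for $i > 0$ by pushing the relevant pieces through the long exact sequence, using that $H^i(\nP, \sI(r-i)) = 0$ and $H^i(\nP, \sO_{\nP}(r-i)) = 0$ in the appropriate range. With $r_2 = r$, $r_X = r$, and $n_0 = r$ in hand, the surjectivity statement from part (2) of the surfaces proposition gives that $\phi_a$ is surjective for $k \geq ar + \max(r, r + r - 2) = ar + \max(r, 2r-2)$, which for $r \geq 2$ equals $ar + (2r-2)$.

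Finally I would run the inductive argument on $a$ using the exact sequences
$$0 \longrightarrow \sI^{a+1} \longrightarrow \sI^a \longrightarrow S^aN^* \longrightarrow 0,$$
exactly as in the proof of the corresponding higher-dimensional proposition. Part (1) of the surfaces proposition, with $r_1 = r$ (the $1$-st partial regularity is also bounded by $r$) and $r_X = r$, gives that $T^aN^*$ is $(ar + r)$-regular, hence $S^aN^*$ as a direct summand is $(ar+r)$-regular. Combining the regularity of $S^aN^*$, the inductive regularity bound on $\sI^a$, and the surjectivity of $\phi_a$ established above, the cohomology chase through the short exact sequence propagates the bound and yields that $\sI^a$ is $(ar + r - 2)$-regular for every $a \geq 1$.

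The main obstacle I anticipate is bookkeeping the precise twists so that the $\max$ in the surjectivity bound collapses to the clean constant $r-2$: one must confirm that for $r \geq 2$ (which holds since we assume $X$ is not a linear space, so all regularities are at least $2$) the term $2r-2$ dominates, and that the inductive regularity constant $r-2$ is compatible with both the regularity of $S^aN^*$ and the surjectivity threshold at each step. Verifying $r_X = r$ cleanly is the one genuinely non-formal input; everything else is a direct transcription of the surfaces proposition with the invariants specialized to $r_1 = r_2 = r_X = n_0 = r$.
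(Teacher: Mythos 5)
Your overall route is exactly the intended one (the paper gives no separate argument for this proposition; it is meant to follow by specializing the surfaces proposition, just as you do), but your specialization of the invariants is too crude by one, and that one matters. From the sequence $0\to\sI\to\sO_{\nP}\to\sO_X\to 0$, $r$-regularity of $\sI$ gives that $\sO_X$ is $(r-1)$-regular, not merely $r$-regular: for $i>0$ one has $H^i(\sO_{\nP}(r-1-i))\to H^i(\sO_X(r-1-i))\to H^{i+1}(\sI(r-1-i))$, and the right-hand group vanishes because it is $H^{i+1}(\sI(r-(i+1)))$. Likewise $X$ is $(r-1)$-normal, since $H^1(\nP,\sI(k))=0$ already for $k\geq r-1$. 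With your choices $r_X=n_0=r$, part (2) of the surfaces proposition only gives surjectivity of $\phi_a$ for $k\geq ar+2r-2$, and the binding condition in the induction is $H^1(\sI^{a+1}(m-1))=0$ with $m=(a+1)r+r-2$, i.e.\ you need $\phi_a$ surjective at the twist $m-1=ar+2r-3$. That is exactly one below your threshold, so your chase only closes at $(ar+r-1)$-regularity, not the claimed $(ar+r-2)$. (The same off-by-one appears in the $H^2$ step when $r=2$, where you need $H^1(S^aN^*(ar))=0$ but your bound $(ar+r)$-regularity of $S^aN^*$ only reaches twist $ar+1$.)

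The fix is immediate: take $r_1=r_2=r$ but $r_X=n_0=r-1$. Then $T^aN^*$, hence $S^aN^*$, is $(ar+r-1)$-regular, and $\phi_a$ is surjective for $k\geq ar+\max(r-1,\,2r-3)=ar+2r-3$ (using $r\geq 2$). The induction through $0\to\sI^{a+1}\to\sI^a\to S^aN^*\to 0$ then closes exactly at $m=(a+1)r+r-2$: the $H^1$ step needs surjectivity at twist $ar+2r-3$, which you now have, and the higher cohomology steps follow from the regularity of $S^aN^*$ and the inductive hypothesis. So the architecture of your proof is sound; only the identification of $r_X$ and $n_0$ needs to be sharpened to the standard $(r-1)$ values.
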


\begin{remark} Again this formula is sharp. For example if $X$ is a Veronese surface, then it has the minimal degree and therefore has the minimal regularity $2$. In this situation, we see that $\reg\sI^a=2a$.
\end{remark}

\end{document}